\documentclass[11pt]{amsart}

\usepackage[utf8]{inputenc}
\usepackage[T1]{fontenc}
\usepackage{amsmath,amssymb,amsthm,mathtools}
\usepackage{enumitem}
\usepackage{hyperref}
\usepackage{microtype}
\theoremstyle{plain}
\newtheorem{theorem}{Theorem}[section]
\newtheorem{corollary}[theorem]{Corollary}
\newtheorem{proposition}[theorem]{Proposition}

\newtheorem{definition}[theorem]{Definition}

\theoremstyle{remark}
\newtheorem{remark}[theorem]{Remark}

\newcommand{\Ric}{\operatorname{Ric}}
\newcommand{\tr}{\operatorname{tr}}

\newcommand{\rank}{\operatorname{rank}}
\newcommand{\Met}{\operatorname{Met}}
\newcommand{\M}{\mathcal M}

\newcommand{\mcT}{\mathcal T}
\newcommand{\R}{\mathbb R}

\title[Critical metrics for Dirac and Yamabe ratio of eigenvalues]{Critical metrics for the Dirac--Yamabe eigenvalue ratio and Einstein--Dirac equations }
\author{Mehdi Eddaoudi}
\address{Department of Mathematics and Data Science, Vrije Universiteit Brussel.
}
\email{mehdi.eddaoudi@vub.be}

\begin{document}

\begin{abstract}
Let $(M^n,g)$ be a closed spin manifold of dimension $n\geq 3$ with
positive Yamabe constant. We prove that critical metrics for the mixed eigenvalue ratio $\mathcal H(g)=\frac{\lambda_1(D_g)^2}{\lambda_1(L_g)},$
where $D_g$ is the Dirac operator and $L_g$ is the
conformal Laplacian are characterized by a harmonic section whose
energy-momentum tensor satisfies an Einstein--Dirac equation. We discuss consequences of this result for Hijazi's inequality, and we construct an example to prove that $\mathcal H$ is unbounded above within the conformal class of the round sphere.
\end{abstract}

\maketitle
\tableofcontents

\section{Introduction}
\subsection{The Dirac--Yamabe ratio of eigenvalues}

Let $(M^n,g)$ be a smooth closed connected spin Riemannian manifold of
dimension $n\geq 3$. We denote by $D_g$ the Dirac operator acting on the
complex spinor bundle $\Sigma_gM$, and by $L_g$ the conformal Laplacian, also known as the Yamabe operator, defined as
\[
    L_g=\Delta_g+c_nS_g,\qquad c_n=\frac{n-2}{4(n-1)},
\]
where $\Delta_g$ is the non-negative Laplacian and $S_g$ is the scalar
curvature.

These two operators admit a discrete sequence of eigenvalues, and we denote by
$\lambda_1(L_g)$ the first eigenvalue of $L_g$ and by $\lambda_1(D_g)$ the
first positive eigenvalue of $D_g$.

This paper is a follow-up to our previous result in \cite[Theorem~1.1]{eddaoudi2026harmoniceinsteinmapscriticalmetrics},
where we established a correspondence between critical metrics for the ratio $\lambda_2(L_g)/\lambda_1(L_g)$ and harmonic- Einstein maps into spheres.
The proof consists in computing the Euler--Lagrange equations via the Clarke analysis of Petrides and Tewodrose \cite{PetridesTewodrose2024}, and then splitting the resulting tensorial identity into its trace and trace-free parts. This suggests a more general principle for ratios of eigenvalues of conformally covariant operators, namely that their critical points may be characterized by a tensorial identity whose trace carries the conformal constraint, while its trace-free part encodes an explicit geometric interaction.

In this work we illustrate this idea by investigating critical points of the ratio of mixed eigenvalues of the Dirac operator and the conformal Laplacian
\[
    \mathcal{H}(g) := \frac{\lambda_1(D_g)^2}{\lambda_1(L_g)}.
\]
Since $D_g$ is a first-order operator and $L_g$ a second-order operator, the ratio
$\mathcal{H}$ is scale-invariant and well defined on the set of smooth metrics $\mathcal{M}(M)$ on $M$ having a positive Yamabe
constant,
\[
    \mathcal{M}_+ = \bigl\{g\in\mathcal{M}(M) : \lambda_1(L_g)>0\bigr\}.
\]

The choice of this ratio is also motivated by 
Hijazi's celebrated inequality \cite{Hijazi1986}
\begin{equation}\label{eq:hijazi}
    \frac{\lambda_1(D_g)^2}{\lambda_1(L_g)}
    \geq
    \frac{n}{n-2}=\frac{\lambda_1(D_{g_0})^2}{\lambda_1(L_{g_0})},
\end{equation}
where $(\mathbb{S}^n,g_0)$ is the standard sphere. Equality is
attained by metrics carrying a non-zero real Killing spinor, which is a
strictly stronger condition than being Einstein.

The relationship between the Dirac operator and the conformal
Laplacian is a central theme in spin geometry. For instance,
the Lichnerowicz formula
\[
    D_g^2=\nabla^*\nabla+\frac14S_g
\]
exhibits the square of the Dirac operator as a spinorial
counterpart of $L_g$. Moreover, at the variational level, minimizing
$\lambda_1(D_g)\operatorname{Vol}(M,g)^{1/n}$ over a conformal class
for a fixed spin structure is a spinorial analogue of the Yamabe
problem, which has motivated a vast literature on nonlinear
Dirac equations and sharp spinorial Sobolev inequalities
\cite{Lott1986,Baer1992,Ammann2003,AmmannHumbertMorel2006,
AmmannHumbert2006Tori,AmmannHumbert2006Overview,
AmmannGrosjeanHumbertMorel2008,Ammann2009,
Raulot2009,SireXu2021,IsobeXu2023,JulioBatalla2025,GrosjeanGinoux,Ginoux2009}.

By scale invariance, our problem  can be viewed as looking for critical metrics of $\lambda_1(D_g)^2$ under the normalization $\lambda_1(L_g)=1$. In our main Theorem \ref{thm:main} below, the trace part of the tensorial equation
corresponds to conformal variations and yields harmonic sections, while the
trace-free part couples the Einstein tensor to the Dirac stress--energy tensor. The trace part is reminiscent of the ideas of Karpukhin, M\'etras and Polterovich
\cite{KarpukhinMetrasPolterovich2024} on surfaces where they established that critical points in a conformal class of the area-normalized Dirac eigenvalue $\lambda_k(D_g)\operatorname{Area}(M,g)^{1/2}$ are given by harmonic maps into complex projective spaces.

Finally, we also mention the recent variational framework of Martynyuk \cite{Martynyuk2026}, who developed a criterion for the existence of minimizers of volume-normalized Dirac eigenvalues within a fixed conformal class, with an emphasis on the case of surfaces and with applications to the two-dimensional sphere.

\subsection{Main results and consequences}

Let $S(\Sigma_gM^{\oplus m})$ denote the unit sphere bundle of the direct sum
of $m$ copies of $\Sigma_gM$. For a harmonic section $\psi=(\psi_1,\ldots,\psi_m)$, we
denote by $Q_g$ the spinorial energy-momentum tensor defined in Section \ref{sec:harmonicsection}, and we write $Q_g(\psi) :=\sum_{\alpha=1}^mQ_g(\psi_\alpha)$.
We prove the following.

\begin{theorem}\label{thm:main}
A metric $g\in\M_+$ is Clarke critical for $\mathcal H$ if and only if there
exist a positive first eigenfunction $u>0$ of $L_g$ and a finite family of
pairwise $L^2(g)$-orthogonal spinors
$\varphi_1,\ldots,\varphi_m \in\ker(D_g-\lambda_1(D_g))$ such that
\[
\psi=\left(\frac{\varphi_1}{u},\ldots,\frac{\varphi_m}{u}\right)
:M\longrightarrow S(\Sigma_gM^{\oplus m})
\]
is a harmonic section with respect to $g_u=u^{4/(n-2)}g$ and satisfies
\begin{equation}\label{eq:einstein-dirac-section}
    Q_{g}(\psi)
    =
    c_n\frac{\lambda_1(D_g)}{\lambda_1(L_g)}\Ric_{g_u}.
\end{equation}
\end{theorem}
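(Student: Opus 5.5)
We follow the scheme of \cite[Theorem~1.1]{eddaoudi2026harmoniceinsteinmapscriticalmetrics}: compute generalized (Clarke) derivatives of $\lambda_1(D_g)$ and $\lambda_1(L_g)$ in the direction of a symmetric $2$-tensor $h$, apply the Petrides--Tewodrose criterion \cite{PetridesTewodrose2024} to turn Clarke criticality into a tensorial identity, and then split that identity into its trace and trace-free parts.

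\textbf{First variations.} Since $M$ is connected and $\lambda_1(L_g)$ is simple with a positive eigenfunction $u$, normalized by $\int u^2\,dv_g=1$, the map $g\mapsto\lambda_1(L_g)$ is differentiable. Its derivative is
\[
\frac{d}{dt}\lambda_1(L_{g+th})\Big|_{t=0}=-\int_M\langle T_u,h\rangle\,dv_g ,
\]
where $T_u$ is the symmetric tensor obtained by varying $\int(|du|^2+c_nS_gu^2)\,dv_g$. Explicitly,
\[
T_u = du\otimes du - \tfrac12(|du|^2+c_nS_gu^2-\lambda_1(L_g) u^2)g + c_n\bigl(u^2\Ric_g+\nabla^*\nabla(u^2)g-\nabla^2 (u^2)\bigr) .
\]
The Dirac eigenvalue $\lambda_1(D_g)$ may be multiple. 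Using the Bourguignon--Gauduchon identification of spinor bundles, each normalized eigenspinor $\varphi\in E:=\ker(D_g-\lambda_1(D_g))$ has directional derivative $-\tfrac14\int\langle Q_g(\varphi),h\rangle\,dv_g$, with $Q_g(\varphi)=\tfrac12\operatorname{Re}\langle X\cdot\nabla_Y\varphi+Y\cdot\nabla_X\varphi,\varphi\rangle$ the energy--momentum tensor. The Clarke subdifferential of $\lambda_1(D_g)$ is therefore the convex hull of these quadratic forms over unit $\varphi\in E$. Combining the two via the quotient rule, $g$ is Clarke critical for $\mathcal H$ exactly when $0$ lies in the convex hull of
\[
\Bigl\{\,2\lambda_1(D_g)\bigl(-\tfrac14 Q_g(\varphi)\bigr)\lambda_1(L_g) + \lambda_1(D_g)^2T_u\,\Bigr\} .
\]
A Carathéodory/diagonalization argument, diagonalizing the finite convex combination on $E$, then produces pairwise $L^2$-orthogonal $\varphi_1,\dots,\varphi_m\in E$ with
\[
\sum_\alpha Q_g(\varphi_\alpha)=\frac{2\lambda_1(D_g)}{\lambda_1(L_g)}T_u .
\]
The weights are absorbed into the $\varphi_\alpha$, and normalization fixes $\sum\|\varphi_\alpha\|^2$. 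The converse direction is the same computation read backwards.

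\textbf{Trace part.} Since $\tr Q_g(\varphi)=\lambda_1(D_g)|\varphi|^2$, taking traces and using $L_gu=\lambda_1(L_g)u$ should reduce to
\[
\sum_\alpha|\varphi_\alpha|^2=u^2 ,
\]
possibly after a second-order identity in $u^2$ via $\Delta(u^2)$. This is what makes $\psi=\varphi/u$ a unit section. Next we pass to the conformal metric $g_u=u^{4/(n-2)}g$. Spinors transform by $\varphi\mapsto u^{-(n-1)/(n-2)}\varphi$ and $D_{g_u}$ scales covariantly, so $\psi$ satisfies a Dirac-type equation with respect to $g_u$. Combined with $|\psi|=1$, this yields the harmonic section equation: the tension $\nabla^*\nabla\psi$ is parallel to $\psi$ with coefficient the energy density.

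\textbf{Trace-free part.} This is the main obstacle. We must show that the trace-free part of $T_u$ equals $c_nu^2\mathring{\Ric}_{g_u}$ up to the correct power of $u$, using the conformal change formula
\[
\Ric_{g_u}=\Ric_g-(n-2)u^{-1}\nabla^2u\cdot\tfrac{2}{n-2}+\dots .
\]
We must also track how $Q$ transforms, noting that $Q_{g}(\psi)$ in \eqref{eq:einstein-dirac-section} mixes the metric $g$ with $\psi=\varphi/u$. First we would write $\varphi=u\psi$ and expand
\[
Q_g(u\psi)=u^2Q_g(\psi)+\tfrac12u\operatorname{Re}\langle X\cdot du(Y)\psi+\dots,\psi\rangle ,
\]
where the cross terms should vanish by Clifford antisymmetry ($\operatorname{Re}\langle X\cdot\psi,\psi\rangle=0$). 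This gives $\sum_\alpha Q_g(\varphi_\alpha)=u^2Q_g(\psi)$. After that, the $du\otimes du$ and $\nabla^2(u^2)$ terms in $T_u$ should combine into exactly $\tfrac{c_n}{2}u^2(\Ric_{g_u}-\Ric_g)$-type terms. The trace terms are fixed by the trace identity above, which gives \eqref{eq:einstein-dirac-section}. Checking all the constants, which are sensitive to the convention for $T_u$, is the delicate step. We would verify them on the round sphere with $u=1$ and Killing spinors as a consistency test.
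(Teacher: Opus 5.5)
Your skeleton is the paper's: the Petrides--Tewodrose criterion gives a tensor identity, its trace makes $\psi=\varphi/u$ a unit section, $Q_g(u\psi)=u^2Q_g(\psi)$ (which you handle correctly) transfers it to $\psi$, and the converse is read backwards. But two steps fail as written.

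\textbf{The constants.} These are not cosmetic, because your Euler--Lagrange identity has no solutions.
\begin{itemize}
\item The Bourguignon--Gauduchon derivative is $-\frac12\int_M\langle Q_g(\varphi),h\rangle_g\,dv_g$, not $-\frac14$. For $h=g$ one has $\lambda_1(D_{(1+t)g})=(1+t)^{-1/2}\lambda_1(D_g)$ and $\tr_gQ_g(\varphi)=\lambda_1(D_g)|\varphi|_g^2$. With your factor, every element of your convex hull pairs with $h=g$ to $\frac12\lambda_1(D_g)^2\lambda_1(L_g)\neq0$. This contradicts the scale invariance of $\mathcal H$, so $0$ is never attained. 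Equivalently, integrating the trace of your identity gives $\sum_\alpha\|\varphi_\alpha\|^2_{L^2}=2$ instead of $1$.
\item The term $c_n\nabla^*\nabla(u^2)\,g$ in your $T_u$ has the wrong sign. With it, $\tr_gT_u=\lambda_1(L_g)u^2+2nc_n\Delta_g(u^2)$, and no further identity turns this into a multiple of $u^2$.
\item With $T_u=-\mcT^{L_g}_{\lambda_1(L_g)}(u)$, the identity reads $\sum_\alpha Q_g(\varphi_\alpha)=\frac{\lambda_1(D_g)}{\lambda_1(L_g)}T_u$. Then $\tr_gT_u=\lambda_1(L_g)u^2$ follows directly from $L_gu=\lambda_1(L_g)u$.
\item The conformal change of Ricci gives the full identity $T_u=c_nu^2\Ric_{g_u}$. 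The coefficient is $c_n$, not the $\frac{c_n}{2}$ you anticipate; that value is also inconsistent with your own preceding sentence. So no separate trace-free analysis is needed.
\end{itemize}
Also, the ``exactly when'' in the converse direction is not formal Clarke calculus, since chain rules only give inclusions. It is the sufficiency theorem of Petrides--Tewodrose, and its hypotheses must be checked: simple denominator, $\lambda_1(D_g)$ at the start of its cluster, $\partial_1F>0$.

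\textbf{The harmonicity step.} As described, it has no working mechanism.
\begin{itemize}
\item A first-order equation plus $|\psi|=1$ does not yield a second-order equation for $\psi$. (Note that it is $\chi_\alpha=u^{-1/(n-2)}\psi_\alpha$, not $\psi$, that satisfies the clean equation $D_{g_u}\chi_\alpha=\lambda_1(D_g)u^{-2/(n-2)}\chi_\alpha$.)
\item Definition~\ref{def: spinorial section} uses $\nabla^g$ and the Hermitian metric of $g$, with only the adjoint taken with respect to $g_u$. Moving to $\Sigma_{g_u}M$ and $D_{g_u}$ is therefore the wrong setting.
\item The unweighted $(\nabla^g)^*\nabla^g\psi$ is not normal in general: its tangential part is $2\nabla^g_{\nabla\log u}\psi$.
\end{itemize}
The missing ingredient is the Lichnerowicz formula in $g$,
\[
(\nabla^g)^*\nabla^g\varphi_\alpha=\Bigl(\lambda_1(D_g)^2-\frac14S_g\Bigr)\varphi_\alpha,
\]
whose coefficient is real and independent of $\alpha$. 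Writing $\varphi_\alpha=u\psi_\alpha$ and expanding gives
\[
(\nabla^g)^*\nabla^g\psi_\alpha-2\nabla^g_{\nabla\log u}\psi_\alpha
=\Bigl(\lambda_1(D_g)^2-\frac14S_g-\frac{\Delta_gu}{u}\Bigr)\psi_\alpha .
\]
The left-hand side is exactly $u^{4/(n-2)}(\nabla^g)^*_{g_u}\nabla^g\psi_\alpha$, the first variation of $\frac12\int_Mu^2|\nabla^g\psi|_g^2\,dv_g$. The right-hand side is a single real function times $\psi$, so it is normal to $S(\Sigma_gM^{\oplus m})$. That is precisely harmonicity with respect to $g_u$.
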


By taking the trace in the  tensorial identity \eqref{eq:einstein-dirac-section} we obtain
$\tr_{g_u}Q_g(\psi)=\lambda_1(D_g)u^{-4/(n-2)}$, therefore we can write 
\begin{equation}\label{eq:einstein-trace-reversed}
    \Ric_{g_u}-\frac12 S_{g_u}g_u
    =
    \frac{\lambda_1(L_g)}{c_n\lambda_1(D_g)}
    \left(
     Q_{g}(\psi)
    -
    \frac12
    \tr_{g_u}\bigl( Q_{g}(\psi)\bigr)g_u
    \right).
\end{equation}
Since the left-hand side is divergence-free, the energy-momentum part of $\psi$
satisfies the conservation law
\begin{equation}\label{eq:conservation}
    \operatorname{div}_{g_u}
    \left[
     Q_{g}(\psi)
    -
    \frac12
    \tr_{g_u}\bigl( Q_{g}(\psi)\bigr)g_u
    \right]
    =
    0.
\end{equation}

From the viewpoint of mathematical physics,
Equation~\eqref{eq:einstein-trace-reversed}
can be interpreted as a nonlinear Riemannian
Einstein--Dirac system, related to the framework
of Friedrich and Kim \cite{FriedrichKim2000}, 
Soler \cite{Soler1970} and Branding \cite{Branding2019}.

Theorem~\ref{thm:main} also yields several consequences for critical point in Hijazi's
inequality beyond the equality case. For instance, Corollary~\ref{cor:deffect}
expresses the defect $\lambda_1(D_g)^2-\frac{n}{n-2}\lambda_1(L_g)$ of critical
metrics in terms of the twistor energy of the conformally rescaled spinors.
When this defect vanishes, $u$ is forced to be constant and we recover real
Killing spinors. In Corollary~\ref{cor:anisotropic-hijazi} we refine this estimate through a weighted comparison of $|\Ric_{g_u}|_{g_u}^2$ and $S_{g_u}^2$. In
particular, if $\rank(\Ric_{g_u})\leq r$ for some $2\leq r\leq n$, we obtain in Corollary~\ref{cor:finiterank} the following rank estimate

\begin{equation}
    \frac{\lambda_1(D_g)^2}{\lambda_1(L_g)}
    \geq \frac{r(n-1)}{(r-1)(n-2)}.
\end{equation}
In particular when $r=n$, this recovers Hijazi's bound. We
characterize the equality case and we highlight examples of critical metrics attaining
equality.
\medskip

Our second result shows that $\mathcal H$ is unbounded above,
even within the round conformal class of the sphere $(\mathbb{S}^n,[g_0])$.

\begin{theorem}\label{thm:unbounded}
Let $(\mathbb S^n,g_0)$, $n\geq 3$, be the round sphere with its standard spin
structure. Then
\[
        \sup_{g\in[g_0]}
        \frac{\lambda_1(D_g)^2}{\lambda_1(L_g)}=+\infty.
\]
\end{theorem}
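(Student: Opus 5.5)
The plan is to exhibit a family $g_\varepsilon=f_\varepsilon^{2}g_0\in[g_0]$ for which $\lambda_1(L_{g_\varepsilon})\to 0$ while $\lambda_1(D_{g_\varepsilon})$ stays bounded below. Equivalently, since $\mathcal H$ is scale invariant, we may instead keep $\lambda_1(L)$ bounded and let $\lambda_1(D)\to\infty$. I will use the first version. The natural candidate is a dumbbell-type degeneration, in which the sphere is conformally pinched so that it looks like two (or many) large regions joined by thin necks.

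The first reduction uses that $\lambda_1(L_g)$ is a first eigenvalue. By the variational characterization
\[
\lambda_1(L_g)=\inf_{v\neq 0}\frac{\int_M\bigl(|dv|_g^2+c_nS_gv^2\bigr)\,dv_g}{\int_M v^2\,dv_g},
\]
any test function $v$ gives an upper bound. I would choose $f_\varepsilon$ equal to a large constant $R$ on most of the sphere, i.e.\ a big round region of radius $\sim R$. The constant test function then gives $\lambda_1(L)\lesssim c_n\,\overline{S}$, which is small because the scalar curvature there is $\sim R^{-2}$. The transition region must contribute little, and I would control this with a cutoff test function supported in the large region.

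Making $\lambda_1(L)$ small is easy, since simply scaling does it. The real point is that $\lambda_1(D_g)^2$ must not shrink at the same rate. Pure scaling fails because both terms scale as $R^{-2}$. So the geometry must have volume concentrated where the scalar curvature is small, the Dirac spectrum must stay large, and the first eigenvalue of $D$ must be governed by a short-scale piece.

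This suggests the reverse construction: start from round $g_0$, whose $\lambda_1(D)$ equals $n/2$, and attach via the conformal factor a long thin almost-flat region. Concretely, I would conformally blow up a neighbourhood of a point into a long cylinder $[0,T]\times\mathbb S^{n-1}$ of unit radius. This is conformal to the punctured sphere via $f=|x|^{-1}$ in stereographic coordinates, so the metric stays in $[g_0]$. On the cylinder the conformal Laplacian has $c_nS=c_n(n-1)(n-2)$. The bottom of its spectrum is $(n-2)^2/4$ (the constant mode in the $\mathbb S^{n-1}$ direction), plus a term of order $T^{-2}$ coming from the longitudinal direction, and this does not go to zero.

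The Dirac operator on $\mathbb S^{n-1}$ of radius $1$ has spectral gap $(n-1)/2$. Longitudinal modes add $\pi^2k^2/T^2$, so $\lambda_1(D)^2\ge (n-1)^2/4$. This gives no blow-up either, so cylinders alone do not suffice.

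The mechanism I would therefore aim for is to shrink the cross-section radius $\rho\to0$ while keeping the total geometry conformal to round. Concretely, I would use necks $[0,T]\times \rho\,\mathbb S^{n-1}$. Since this is just a scaling of a cylinder, which is conformally flat, it is achievable inside $[g_0]$ by taking $f=\rho|x|^{-1}$ on an annulus, glued smoothly to two caps of size $\rho$.

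Then every harmonic spinor mode on $\rho\mathbb S^{n-1}$ has $|D|\ge (n-1)/(2\rho)$, because the spin structure induced on the cross-section has no parallel spinors. So $\lambda_1(D)\gtrsim\rho^{-1}$ on the neck. On the caps, which are rescaled hemispheres of radius $\rho$, the spinorial bound is also $\gtrsim\rho^{-1}$, via Hijazi/Bär-type estimates for small caps glued to the neck.

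Meanwhile the conformal Laplacian, restricted to the constant mode on the cross-section, behaves like $-\partial_t^2+c_n(n-1)(n-2)\rho^{-2}$. This is also of order $\rho^{-2}$, with constant $(n-2)^2/(4\rho^2)$. The ratio is then roughly
\[
\frac{(n-1)^2}{(n-2)^2}
\]
plus cap effects, which is bounded again. So the scalar curvature must be made small where $L$-test functions live, while the spinors are forced to see positive curvature.

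The final plan is a genuinely non-homogeneous conformal factor. The volume is placed in a large region whose metric is (conformally) close to flat with $S\approx 0$, achieved by a huge flat-ish disc of radius $R$ glued to a round cap, which is a stereographic picture with a large plateau. This makes $\lambda_1(L)\lesssim R^{-2}$ via a cutoff test function. At the same time, I need a lower bound $\lambda_1(D)\ge c>0$ independent of $R$.

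I expect this lower bound to be the main obstacle. Large flat regions support approximately harmonic spinors, namely constant spinors cut off, with Rayleigh quotient $\sim R^{-2}$. To kill them I would twist via the conformal factor so that the flat region is a long thin flat torus-like neck $[0,T]\times\rho\mathbb S^{n-1}$ with $T\gg\rho^{-1}$ and $\rho$ fixed small. There $L$ has a test function with quotient $\sim c_nS+T^{-2}$. So the key step is to prove an estimate in which the $L$-bottom stays small because the test function spreads along the neck with weight, using the precise conformal-factor profile rather than the crude constant-mode bound above, while a Bär-type cross-section estimate keeps $\lambda_1(D)\gtrsim\rho^{-1}$. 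I would first try to obtain this via a variational test function adapted to $f_\varepsilon$, together with a spinorial Bochner estimate on each piece.
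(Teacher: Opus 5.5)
Your proposal never reaches a working family, and the plan you end with cannot work. The neck $[0,T]\times\rho\,\mathbb S^{n-1}$ is not flat: its scalar curvature is the constant $S=(n-1)(n-2)\rho^{-2}$. Since $\Delta\ge0$, every $v$ supported in the neck has $L$-Rayleigh quotient at least $c_nS=(n-2)^2/(4\rho^2)$. So the quotient $c_nS+T^{-2}$ you quote is of order $\rho^{-2}$, not small. No weighting of the test function or finer conformal profile can beat the operator inequality $L\ge c_nS$ there.

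By scale invariance of $\mathcal H$, a radius-$\rho$ neck of length $T$ with caps of size $\rho$ has the same ratio as a unit neck of length $T/\rho$. Taking $\rho$ small with $T\gg\rho^{-1}$ is therefore just lengthening a unit neck, which you already found gives a bounded ratio $\approx(n-1)^2/(n-2)^2$. The flat plateau fails for the reason you give yourself: cut-off parallel spinors force $\lambda_1(D)^2\lesssim R^{-2}$. The cap estimates (``Hijazi/B\"ar-type estimates for small caps'') are never supplied. Your two guiding heuristics both point away from the actual mechanism: that $\lambda_1(L)\to0$ needs small scalar curvature on most of the volume, and that a Dirac gap needs positive curvature.

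What is missing is the following pair of conformal identities and the choice of degeneration. For $g=f^2g_0=u^{4/(n-2)}g_0$, conformal covariance with the test function $u^{-1}$ gives
\[
\lambda_1(L_g)\le\frac{n(n-2)}{4}\,\frac{\operatorname{Vol}(\mathbb S^n,g_0)}{\int_{\mathbb S^n}f^2\,dv_{g_0}},
\]
so one only needs $\int f^2\,dv_{g_0}\to\infty$. The Dirac Rayleigh quotient, written on $g_0$, is $\int f^{-1}|D_{g_0}\psi|^2dv_{g_0}\big/\int f|\psi|^2dv_{g_0}$, which decreases as $f$ increases. Hence $f\le F$ gives a uniform gap as soon as $F^2g_0$ has one.

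The paper takes $F=r^{-1}$, $r=|z|$, with $\Sigma=\{z=0\}$ a codimension-two great sphere. Then $F^2g_0\cong\mathbb H^{n-1}\times\mathbb S^1$, and the normal circle carries the antiperiodic spin structure, so $D^2\ge1/4$ (after a cutoff argument near $\Sigma$). At the same time $\int r^{-2}dv_{g_0}$ diverges logarithmically, precisely because the codimension is two. Thus $f_\varepsilon=(r^2+\varepsilon^2)^{-1/2}$ gives $\lambda_1(L_{g_\varepsilon})\lesssim1/\log(1/\varepsilon)$ and $\lambda_1(D_{g_\varepsilon})^2\ge1/4$.

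In the limit, $c_nS=-(n-2)^2/4$ exactly cancels the bottom $(n-2)^2/4$ of the Laplacian on $\mathbb H^{n-1}$. So it is negative curvature, not small curvature, that drives $\lambda_1(L)$ to zero, and the Dirac gap comes from spin topology rather than positive curvature. The codimension is essential. Blowing up a point, as in your cylinders, gives $\int f^2dv_{g_0}<\infty$ for $n\ge3$. A codimension-one blow-up gives $\mathbb H^n$, whose Dirac spectrum has no gap.
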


The construction of the example relies on a family of conformal metrics degenerating along a
totally geodesic subsphere of codimension two $\Sigma=\mathbb S^{n-2}\subset \mathbb S^n\subset\R^{n-1}\oplus\R^2$. While the first eigenvalue of
the conformal Laplacian tends to zero, the first Dirac eigenvalue retains a uniform gap
away from zero. The latter follows from the geometry of the limiting singular
metric, which is isometric to $\mathbb H^{n-1}\times \mathbb S^1$. More
precisely, let
$r(x,z)=|z|$, and define
\[
        f_\varepsilon=(r^2+\varepsilon^2)^{-1/2},
        \qquad
        g_\varepsilon=f_\varepsilon^2g_0.
\]
Then we prove that there is a constant $C>0$, independent of $\varepsilon$, such that
\[
        \lambda_1(L_{g_\varepsilon})\leq \frac{C}{\log(1/\varepsilon)},
        \qquad
        \lambda_1(D_{g_\varepsilon})^2\geq \frac14,
\]
and hence $\mathcal H(g_\varepsilon)\to +\infty$ as $\varepsilon\to 0$.

\subsection*{Organization of the paper}
Section~\ref{sec:harmonicsection} introduces the necessary material of spinorial energy-momentum
tensor and the weighted harmonic sections needed for our setting.
In Section~\ref{sec:variations} we compute the Euler--Lagrange equation
for $\mathcal H$, and then we prove Theorem~\ref{thm:main} in Section~\ref{sec:main-proof}.
Section~\ref{sec:consequences} establishes the consequences
for critical points in Hijazi's inequality beyond the equality case. 
Finally, Section~\ref{sec:unboundeness} proves that  in general the ratio $\mathcal{H}$ is unbounded in a given conformal class, which is
Theorem~\ref{thm:unbounded}.

\section{Harmonic sections of unit sphere bundles}\label{sec:harmonicsection}
\subsection{Energy-momentum Dirac tensor} We first recall some natural objects associated to the Dirac operator. 
 For a spinor
$\varphi\in\Gamma(\Sigma_gM)$, the spinorial energy-momentum tensor is the symmetric two-tensor defined by
\begin{equation}\label{eq: tensor Q}
    Q_g(\varphi)(X,Y)
    =
    \frac12\operatorname{Re}
    \big\langle
    X\cdot\nabla^g_Y\varphi
    +
    Y\cdot\nabla^g_X\varphi,
    \varphi
    \big\rangle_g .
\end{equation}

We denote by $S(\Sigma_gM^{\oplus m})$ the unit sphere bundle of
$\Sigma_gM^{\oplus m}$, and we set for a map of spinors
$\psi=(\psi_1,\ldots,\psi_m)\in\Gamma(\Sigma_gM^{\oplus m})$,
\[
    Q_g(\psi):=\sum_{\alpha=1}^m Q_g(\psi_\alpha).
\]

If $h=f^2g$ with $f>0$, we use the standard fiberwise
isometric identification of $\Sigma_gM$ and $\Sigma_hM$.
Under this identification, for any smooth real-valued
function $a$ and spinor $\phi$, we have
\[
Q_g(a\phi)=a^2Q_g(\phi),
\qquad
Q_h\bigl(f^{-\frac{n-1}{2}}\phi\bigr)
=f^{-(n-2)}Q_g(\phi).
\]
Both identities follow from the conformal transformation
of the spin connection and the skew-Hermitian character
of Clifford multiplication.

\subsection{Weighted harmonic sections} Now we introduce the right notion of harmonicity in unit sphere bundles that we use throughout this paper. In what follows, the spinor bundle $\Sigma_gM$, its Hermitian metric $\langle\cdot,\cdot\rangle_g$ and the spin connection $\nabla^g$ are those of the background metric $g$.
\begin{definition}\label{def: spinorial section}
Let \(h\) be a metric conformal to \(g\). We say that a section
\[
    \psi:M\longrightarrow S(\Sigma_gM^{\oplus m})
\]
is harmonic with respect to \(h\) if
\[
    \left((\nabla^{g})^*_{h}
    \nabla^{g}\psi\right)^\top=0,
\]
where \((\nabla^{g})^*_{h}\) denotes the formal adjoint of \(\nabla^g\)
computed with respect to the metric \(h\) on \(M\) and the Hermitian metric
induced by \(g\) on \(\Sigma_gM^{\oplus m}\). The superscript \(\top\)
denotes the orthogonal projection onto the tangent space of the unit sphere
bundle.
\end{definition}
This definition is a
weighted form of harmonic sections developed in \cite{Wood1997,BenyounesLoubeauWood2007}. More precisely, they are critical points of the weighted Dirichlet energy. We write the proof of this statement for the sake of completeness.
\begin{proposition}
    Let $h$ be a metric conformal to $g$. A section $\psi:M\longrightarrow S(\Sigma_gM^{\oplus m})$
    is harmonic with respect to $h$ if and only if it is a critical point of the weighted Dirichlet energy
    \[
       E_h(\psi)
      :=
      \frac12
      \int_M
       h^{ij}
       \left\langle
       \nabla^g_i\psi,\nabla^g_j\psi
       \right\rangle_g
    \,dv_h
      =
      \frac12\int_M |\nabla^g\psi|^2_{h,g}\,dv_h .
    \]
    In particular, for $h=g_u=u^{4/(n-2)}g$, we have
    \[
       E_{g_u}(\psi)=\frac12\int_Mu^2\,|\nabla^g\psi|_g^2\,dv_g .
    \]
\end{proposition}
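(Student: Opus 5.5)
We prove both assertions by a direct first-variation computation on the constraint manifold. The key identity is the one defining the formal adjoint $(\nabla^g)^*_h$. In local coordinates, write $h = f^2 g$, so that $dv_h = f^n\,dv_g$ and $h^{ij} = f^{-2}g^{ij}$. The pairing is taken with respect to the $h$-volume and the fiber metric $\langle\cdot,\cdot\rangle_g$, with $1$-forms contracted by $h$. Then for all compactly supported smooth sections $\xi,\eta$ we have
\[
\int_M h^{ij}\operatorname{Re}\langle \nabla^g_i\xi,\eta_j\rangle_g\,dv_h=\int_M \operatorname{Re}\langle \xi,(\nabla^g)^*_h\eta\rangle_g\,dv_h .
\]
Explicitly, $(\nabla^g)^*_h\eta = -f^{-n}\,\nabla^{g,i}\bigl(f^{n-2}\eta_i\bigr)$, where the index is raised with $g$. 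Since $M$ is closed, this is just integration by parts.

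Next we describe the admissible variations. Take a smooth variation $\psi_t$ of $\psi$ through sections of the unit sphere bundle $S(\Sigma_gM^{\oplus m})$. Its variation field $V=\partial_t\psi_t|_{t=0}$ is a section satisfying $\operatorname{Re}\langle V,\psi\rangle_g=0$ pointwise, so $V$ is tangent to the sphere bundle. Conversely, every such $V$ is realized by $\psi_t=(\psi+tV)/|\psi+tV|_g$, which is well defined for small $t$. Differentiating under the integral sign gives
\[
\frac{d}{dt}\Big|_{t=0}E_h(\psi_t)=\int_M h^{ij}\operatorname{Re}\langle\nabla^g_iV,\nabla^g_j\psi\rangle_g\,dv_h=\int_M\operatorname{Re}\langle V,(\nabla^g)^*_h\nabla^g\psi\rangle_g\,dv_h .
\]
Because $V$ is tangent, we may replace $(\nabla^g)^*_h\nabla^g\psi$ by its tangential part $\bigl((\nabla^g)^*_h\nabla^g\psi\bigr)^\top$. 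Thus $\psi$ is critical if and only if this tangential part is $L^2$-orthogonal to every tangent section. Taking $V=\bigl((\nabla^g)^*_h\nabla^g\psi\bigr)^\top$ itself, which is tangent, criticality forces the projection to vanish identically. The converse is immediate. This proves the equivalence with Definition~\ref{def: spinorial section}.

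Finally, for $h=g_u=u^{4/(n-2)}g$ we have $f^2=u^{4/(n-2)}$. Hence $h^{ij}\,dv_h=f^{n-2}g^{ij}\,dv_g=u^{2}g^{ij}\,dv_g$, which yields
\[
E_{g_u}(\psi)=\tfrac12\int_M u^2|\nabla^g\psi|_g^2\,dv_g .
\]

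There is no real obstacle in this argument. The only point requiring care is bookkeeping: the fiber metric and the connection stay those of $g$, while only the base metric, through $h^{ij}$ and $dv_h$, changes. This is what makes the adjoint weighted, since it involves $f^{n-2}$ inside the divergence.
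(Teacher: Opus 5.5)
Your proof is correct and follows the same route as the paper. You use variations through unit sections whose variation field $V$ satisfies $\operatorname{Re}\langle V,\psi\rangle_g=0$, realized by normalizing $\psi+tV$. Integration by parts then gives $\int_M\operatorname{Re}\langle V,(\nabla^g)^*_h\nabla^g\psi\rangle_g\,dv_h$, and the weighted energy follows from the scaling $h^{ij}\,dv_h=u^2g^{ij}\,dv_g$. Your explicit adjoint formula $(\nabla^g)^*_h\eta=-f^{-n}\nabla^{g,i}(f^{n-2}\eta_i)$ and the test direction $V=((\nabla^g)^*_h\nabla^g\psi)^\top$ only spell out steps the paper leaves implicit; the formula is consistent with the one the paper uses in the proof of Theorem~\ref{thm:main}.
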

Here the cotangent variables are contracted with \(h\), while the spinor
inner product is the one induced by \(g\).

\begin{proof}
Let $\psi_t$ be a variation through unit sections with $\dot\psi_0=\eta$, so
that $\operatorname{Re}\langle\eta,\psi\rangle_g=0$. Conversely, every such
$\eta$ arises this way, for instance with
$\psi_t=(\psi+t\eta)/|\psi+t\eta|_g$. An integration by parts gives
\[
    \left.\frac{d}{dt}\right|_{t=0}E_h(\psi_t)
    =\int_M\operatorname{Re}\big\langle
    (\nabla^g)^*_h\nabla^g\psi,\eta\big\rangle_g\,dv_h,
\]
which vanishes for all such $\eta$ if and only if
$\big((\nabla^g)^*_h\nabla^g\psi\big)^\top=0$. The expression of $E_{g_u}$
follows from $g_u^{-1}=u^{-4/(n-2)}g^{-1}$ and
$dv_{g_u}=u^{2n/(n-2)}dv_g$.
\end{proof}

\section{First variations and Clarke criticality}\label{sec:variations}

In this section we compute the first variations of the eigenvalues involved in
$\mathcal H$, and we identify the corresponding notion of criticality. Notice
that the first eigenvalue $\lambda_1(L_g)$ is simple, hence smooth with respect
to the metric, whereas the first positive Dirac eigenvalue $\lambda_1(D_g)$ may
have multiplicity. In that case, along a smooth deformation of the metric the
corresponding spectral cluster splits into several eigenvalue branches, and
$\lambda_1(D_g)$ is in general only locally Lipschitz, not differentiable. In
other terms, the ratio $\mathcal H$ does not admit a classical differential at
every metric, and the usual notion of critical point has to be replaced by a
weaker one. We therefore rely on the framework of Petrides and Tewodrose
\cite{PetridesTewodrose2024}, based on the Clarke generalized differential of a
locally Lipschitz functional, which we recall below.

\subsection{Clarke critical metrics}
Fix an integer $s\geq3$. Let $\Met^s(M)$ be the open subset
of positive definite tensors in the Banach space
$\mathcal C^s(S^2T^*M)$, where we identify
\[
T_g\Met^s(M)=\mathcal C^s(S^2T^*M).
\]
The functional $\mathcal H$ extends to the open set
\[
\mathcal M_+^s
=
\left\{
g\in\Met^s(M):\lambda_1(L_g)>0
\right\}.
\]
 For a locally Lipschitz functional $F$ on $\mathcal M_+^s$
and $h\in\mathcal C^s(S^2T^*M)$, the generalized
directional derivatives are defined as
\[
F^\circ_+(g;h)
=
\limsup_{g'\to g,\,t\downarrow0}
\frac{F(g'+th)-F(g')}{t},
\]
\[
F^\circ_-(g;h)
=
\liminf_{g'\to g,\,t\downarrow0}
\frac{F(g'+th)-F(g')}{t}.
\]
This allows to introduce the Clarke sub/superdifferential
\[
\partial_C^-F(g)
=
\left\{
\ell\in\bigl(\mathcal C^s(S^2T^*M)\bigr)^*:
\ell(h)\leq F^\circ_+(g;h)
\text{ for every }h
\right\},
\]
\[
\partial_C^+F(g)
=
\left\{
\ell\in\bigl(\mathcal C^s(S^2T^*M)\bigr)^*:
\ell(h)\geq F^\circ_-(g;h)
\text{ for every }h
\right\},
\]
where the star denotes the continuous dual.

\begin{definition}[Petrides and Tewodrose
\cite{PetridesTewodrose2024}]
A smooth metric $g\in\mathcal M_+$ is Clarke critical
for $F$ if
\[
0\in\partial_C^-F(g)\cup\partial_C^+F(g).
\]
\end{definition}

\subsection{Tensors associated with eigenvalue variations}

Let $u$ be a normalized positive first eigenfunction of $L_g$. Since $\lambda_1(L_g)$ is simple, along a metric variation $\dot g_0=h$, the first variation of the corresponding eigenvalue branch is represented by a symmetric two-tensor $\mcT^{L_g}_{\lambda_1(L_g)}(u)$:
\[
     \dot\lambda_1(L_g)[h]
     =
     \int_M\langle\mcT^{L_g}_{\lambda_1(L_g)}(u),h\rangle_g\,dv_g.
\]

\begin{proposition}\label{prop:firstvariationL_g}
The tensor $\mcT^{L_g}_{\lambda_1(L_g)}(u)$ is given by
\begin{align}\label{eq:TL}
\mcT^{L_g}_{\lambda_1(L_g)}(u)
={}&-du\otimes du+\frac12|du|_g^2g \nonumber \\
&+c_n\left(-u^2\Ric_g+\nabla_g^2(u^2)+\Delta_g(u^2)g +\frac12S_g u^2g\right)
-\frac{\lambda_1(L_g)}{2}u^2g .
\end{align}
\end{proposition}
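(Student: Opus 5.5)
Since $\lambda_1(L_g)$ is simple, I will compute its derivative along a smooth variation $g_t=g+th$ by the Rayleigh quotient (Hellmann--Feynman) method. Write
\[
\lambda_1(L_{g_t})=\min_{v}\frac{\int_M\bigl(|dv|^2_{g_t}+c_nS_{g_t}v^2\bigr)\,dv_{g_t}}{\int_M v^2\,dv_{g_t}}.
\]
With $u$ normalized so that $\int_M u^2dv_g=1$, the minimizer $u_t$ depends smoothly on $t$. Because $u$ is a critical point of the quotient, the $\dot u$ contributions cancel. This leaves
\[
\dot\lambda_1[h]=\frac{d}{dt}\Big|_{t=0}\int_M\bigl(|du|^2_{g_t}+c_nS_{g_t}u^2\bigr)\,dv_{g_t}-\lambda_1(L_g)\frac{d}{dt}\Big|_{t=0}\int_M u^2\,dv_{g_t},
\]
with $u$ held fixed.

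The next step is to collect the standard first variations:
\[
\tfrac{d}{dt}g_t^{-1}=-h^{\sharp\sharp},\qquad
\tfrac{d}{dt}dv_{g_t}=\tfrac12\tr_g h\,dv_g,\qquad
\dot S=-\langle \Ric_g,h\rangle+\operatorname{div}\operatorname{div}h-\Delta_g\tr_g h
\]
(the last with $\Delta_g$ the non-negative Laplacian; I will double-check the sign conventions here). Substituting term by term:
\begin{itemize}[label=$\cdot$]
\item The Dirichlet term gives $\int\langle -du\otimes du+\tfrac12|du|^2g,h\rangle\,dv_g$.
\item The volume variation in the constraint gives $-\tfrac{\lambda_1}{2}u^2\langle g,h\rangle$.
\item The volume variation in the potential term gives $\tfrac{c_n}{2}S_gu^2\langle g,h\rangle$.
\item The curvature variation gives $c_n\int u^2\bigl(-\langle\Ric_g,h\rangle+\operatorname{div}\operatorname{div}h-\Delta_g\tr_gh\bigr)$.
\end{itemize}

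The only real work is the last item. I will integrate by parts twice to move the derivatives off $h$ and onto $u^2$:
\[
\int u^2\operatorname{div}\operatorname{div}h=\int\langle\nabla^2_g(u^2),h\rangle,\qquad
-\int u^2\Delta_g\tr_gh=-\int\Delta_g(u^2)\,\langle g,h\rangle.
\]
Here a sign mismatch with the claimed $+\Delta_g(u^2)g$ has to be resolved. With the non-negative Laplacian $\Delta=-\tr\nabla^2$, the standard formula reads $\dot S=-\langle\Ric,h\rangle+\delta\delta h+\Delta\tr h$, so the curvature term contributes $+\Delta_g(u^2)g$, consistent with the statement. I therefore expect the main obstacle to be exactly this bookkeeping of Laplacian and divergence sign conventions. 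I would check it on the conformal direction $h=2\phi g$: pairing with $g$ must reproduce the known conformal variation of $\lambda_1(L_g)$, which can be read off from the covariance $L_{e^{2w}g}=e^{-\frac{n+2}{2}w}L_ge^{\frac{n-2}{2}w}$.

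Assembling the four contributions gives exactly \eqref{eq:TL}.
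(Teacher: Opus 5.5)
Your proposal is correct. It is the standard Hellmann--Feynman computation that the paper does not write out and instead defers to the literature: differentiate the Rayleigh quotient at fixed $u$, use $\partial_t g_t^{-1}=-h$, $\partial_t\,dv_{g_t}=\frac12\tr_g h\,dv_g$ and the variation of $S_g$, then integrate by parts twice onto $u^2$. Drop your first formula for $\dot S$, whose $-\Delta_g\tr_g h$ term has the wrong sign for the non-negative Laplacian, and keep your corrected $\dot S=-\langle\Ric_g,h\rangle+\delta\delta h+\Delta_g\tr_g h$; it yields exactly \eqref{eq:TL}, and the trace identity \eqref{eq:TL-trace} confirms the sign.
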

The proof of Proposition \ref{prop:firstvariationL_g} is standard and can be found in \cite{PetridesTewodrose2024,eddaoudi2026harmoniceinsteinmapscriticalmetrics}.

Taking the trace and trace-free parts, a straightforward computation lead to
\begin{equation}\label{eq:TL-trace}
     \tr_g\mcT^{L_g}_{\lambda_1(L_g)}(u)=-\lambda_1(L_g) u^2,
\end{equation}
and
\begin{equation}\label{eq:TL-tf}
        \big(\mcT^{L_g}_{\lambda_1(L_g)}(u)\big)^0
        =
        -(du\otimes du)^0
        +c_n\left((\nabla_g^2(u^2))^0-u^2\Ric_g^0\right).
\end{equation}

In particular, if we set $g_u=u^{4/(n-2)}g$, then
\begin{equation}\label{eq:TL-Ric-full}
        -u^{-2}\mcT^{L_g}_{\lambda_1(L_g)}(u)=c_n\Ric_{g_u}, \qquad c_nS_{g_u}=\lambda_1(L_g)u^{-4/(n-2)}.
\end{equation}

For the Dirac operator, we use the formula of Bourguignon and Gauduchon
\cite{BourguignonGauduchon1992}, which computes the first variation of $\lambda_1(D_g)$
along each analytic eigenvalue branch.

\begin{proposition}[Bourguignon and Gauduchon's formula \cite{BourguignonGauduchon1992}]
Along the eigenvalue branch corresponding to the spinor $\psi$, the first variation is given by:
\[
\dot\lambda_1(D_g)[h]
=
-\frac12
\int_M
\langle Q_g(\psi),h\rangle_g\,dv_g,
\]
where the tensor $Q_g(\psi)$ is defined in  \eqref{eq: tensor Q}.
\end{proposition}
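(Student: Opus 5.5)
The statement to prove is the Bourguignon--Gauduchon variational formula: along an analytic eigenvalue branch $\lambda(t)$ of $D_{g_t}$ with $g_t=g+th+o(t)$ and $\lambda(0)=\lambda_1(D_g)$, we have $\dot\lambda=-\frac12\int_M\langle Q_g(\psi),h\rangle_g\,dv_g$. Here $\psi$ is the $L^2(g)$-normalized limit eigenspinor of the branch.

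\textbf{Step 1: pull everything back to $\Sigma_gM$.} Since the spinor bundles of different metrics are different bundles, I first identify them. Let $b_t=(g^{-1}g_t)^{-1/2}$; this is $g_t$-orthonormal and $g$-symmetric. It induces a fibrewise isometry $\beta_t:\Sigma_gM\to\Sigma_{g_t}M$. I then set
\[
\tilde D_t=\beta_t^{-1}D_{g_t}\beta_t .
\]
To keep self-adjointness with respect to the fixed $L^2(g)$ product, I conjugate further by the square root of the volume density $(dv_{g_t}/dv_g)^{1/2}$. This gives an analytic family $\tilde D_t$ of self-adjoint operators on the fixed Hilbert space $L^2(\Sigma_gM,dv_g)$, with the same spectrum as $D_{g_t}$.

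\textbf{Step 2: Kato--Rellich.} By Kato--Rellich, analytic eigenbranches exist, each with an analytic normalized eigenspinor $\psi_t$ and $\psi_0=\psi$. Differentiating $\tilde D_t\psi_t=\lambda(t)\psi_t$ and pairing with $\psi$ gives
\[
\dot\lambda=\operatorname{Re}\int_M\langle\dot{\tilde D}\psi,\psi\rangle\,dv_g ,
\]
because the terms containing $\dot\psi$ cancel by self-adjointness. In the multiplicity case this selects a basis of the eigenspace diagonalizing the quadratic form $\psi\mapsto\langle\dot{\tilde D}\psi,\psi\rangle$. That diagonalization is exactly what the statement "along the eigenvalue branch corresponding to $\psi$" refers to.

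\textbf{Step 3: compute $\dot{\tilde D}$.} This is the main obstacle. Writing $\dot b=-\frac12 h$ (as an endomorphism via $g$), the standard computation gives
\[
\dot{\tilde D}\varphi=-\frac12\sum_{i,j}h(e_i,e_j)\,e_i\cdot\nabla_{e_j}\varphi+\frac14\bigl(d\tr h-\operatorname{div}h\bigr)\cdot\varphi ,
\]
plus a real multiple of $\varphi$ from the volume conjugation. I plan to carry this out in a local orthonormal frame:
\begin{itemize}
\item the change of the connection one-form contributes terms in $\nabla h$;
\item combined with the variation of the Clifford action, these assemble into a Clifford multiplication by the one-form $d\tr h-\operatorname{div}h$ together with a real scalar.
\end{itemize}

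\textbf{Step 4: evaluate the pairing.} Clifford multiplication by a real vector is skew-Hermitian, so $\operatorname{Re}\langle X\cdot\psi,\psi\rangle=0$. The zeroth-order Clifford term therefore drops out of $\operatorname{Re}\langle\dot{\tilde D}\psi,\psi\rangle$. Any real scalar multiple of $\psi$ must also cancel, either against the volume conjugation or by self-adjointness; I would double-check this carefully. What remains is
\[
\dot\lambda=-\frac12\int_M\sum_{i,j}h_{ij}\operatorname{Re}\langle e_i\cdot\nabla_{e_j}\psi,\psi\rangle\,dv_g .
\]
Since $h$ is symmetric, this equals $-\frac12\int_M\langle Q_g(\psi),h\rangle_g\,dv_g$ with $Q_g$ as in \eqref{eq: tensor Q}.

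\textbf{Consistency check.} Take $h=g$. Then $\tr_g Q_g(\psi)=\operatorname{Re}\langle D\psi,\psi\rangle=\lambda|\psi|^2$, so the formula gives $\dot\lambda=-\lambda/2$. This matches the scaling $\lambda(c^2g)=c^{-1}\lambda(g)$.
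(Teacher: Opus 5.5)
Your outline is correct and is the standard Bourguignon--Gauduchon derivation (pull back via $b_t$, differentiate the operator, pair with the eigenspinor); the paper gives no proof of its own and simply cites that reference, and like the paper you quote rather than derive the key formula for $\dot{\tilde D}$. One correction: the volume conjugation does not add a real multiple of $\varphi$. It adds the commutator $[\tfrac14\tr_g h, D_g]\varphi=-\tfrac14\,d(\tr_g h)\cdot\varphi$, which cancels the $d\tr_g h$ term and, like every Clifford term, drops out of $\operatorname{Re}\langle\cdot,\psi\rangle$; a genuine real scalar term $c\,\varphi$ would instead contribute $\int_M c|\psi|^2\,dv_g$ and could not be removed by self-adjointness, so your hedge in Step~4 is misreasoned, though harmless here since no such term exists.
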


Therefore, the first variation of \(\lambda_1(D_g)^2\) along this branch is represented by $-\lambda_1(D_g)Q_g(\psi).$
According to the Clarke description for multiple eigenvalues, an element of the relevant Clarke superdifferential of \(\lambda_1(D_g)^2\) is obtained by taking the convex hull of these variations:
\[
-\lambda_1(D_g)\sum_{\alpha=1}^m c_\alpha Q_g(\psi_\alpha) \in \partial_C^+\big(\lambda_1(D_g)^2\big)(g),
\]
where \(c_\alpha\geq 0\), \(\sum_{\alpha=1}^m c_\alpha=1\), and the eigenspinors \(\psi_\alpha\) are normalized  in \(\ker(D_g-\lambda_1(D_g))\).

\subsection{Euler--Lagrange equations}
We now make use of the machinery of Petrides and Tewodrose
\cite[Theorems~3.13 and~3.14]{PetridesTewodrose2024} to compute Euler Lagrange equations for the ratio functional $\mathcal H$.

\begin{proposition}\label{prop:EL}
A metric $g\in\mathcal{M}_+$ is Clarke critical for $\mathcal H$ if and only if there exist a positive first $L^2$ normalized eigenfunction $u>0$ of $L_g$,
and finitely many pairwise $L^2(g)$-orthogonal spinors $ \varphi_1,\ldots,\varphi_m\in\ker(D_g-\lambda_1(D_g))$ such that
\begin{equation}\label{eq:EL}
        \sum_{\alpha=1}^m Q_g(\varphi_\alpha)
        =
        -\frac{\lambda_1(D_g)}{\lambda_1(L_g)}
        \mcT^{L_g}_{\lambda_1(L_g)}(u).
\end{equation}
\end{proposition}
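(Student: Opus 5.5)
We will derive \eqref{eq:EL} from the quotient rule for Clarke differentials combined with the description of the sub/superdifferentials of $\lambda_1(D_g)^2$ and $\lambda_1(L_g)$ recalled above. The approach follows the structure of \cite[Theorems~3.13 and~3.14]{PetridesTewodrose2024}.

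\emph{Derivative of the quotient.} Since $\lambda_1(L_g)$ is simple, it is $C^1$ near $g$, with differential represented by $\mcT^{L_g}_{\lambda_1(L_g)}(u)$ by Proposition~\ref{prop:firstvariationL_g}. The numerator $N(g)=\lambda_1(D_g)^2$ is locally Lipschitz. Its Clarke superdifferential at $g$ is the closed convex hull of the branch derivatives $-\lambda_1(D_g)Q_g(\psi)$, where $\psi$ ranges over unit eigenspinors in $\ker(D_g-\lambda_1(D_g))$; the same set appears in $\partial_C^-$, by the two-sided description in \cite{PetridesTewodrose2024}, since all branches are analytic. Write $\lambda=\lambda_1(L_g)>0$. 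For $\mathcal H=N/\lambda$, the Clarke directional derivatives satisfy
\[
\mathcal H^\circ_\pm(g;h)=\frac{1}{\lambda}N^\circ_\pm(g;h)-\frac{N}{\lambda^2}\dot\lambda[h],
\]
because the smooth factor contributes an honest derivative that passes through the $\limsup$/$\liminf$. Hence $0\in\partial_C^\pm\mathcal H(g)$ if and only if the functional $\frac{N}{\lambda}\dot\lambda$ lies in the corresponding Clarke differential of $N$.

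\emph{Identifying the Clarke differential.} The finite-dimensional eigenspace $E=\ker(D_g-\lambda_1(D_g))$ reduces the problem to a statement about the quadratic form $\psi\mapsto Q_g(\psi)$ on $E$. Criticality becomes the existence of a density matrix, that is, a positive semidefinite Hermitian $A$ on $E$ with $\tr A=1$, such that
\[
\sum_{\alpha,\beta}A_{\alpha\beta}Q_g(e_\alpha,e_\beta)=-\frac{N}{\lambda\,\lambda_1(D_g)}\mcT^{L_g}_{\lambda}(u)
=-\frac{\lambda_1(D_g)}{\lambda}\mcT^{L_g}_{\lambda}(u).
\]
Diagonalizing $A=\sum c_\alpha\,\psi_\alpha\psi_\alpha^*$ with $c_\alpha\ge0$ produces $L^2$-orthogonal eigenspinors. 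Setting $\varphi_\alpha=\sqrt{c_\alpha}\,\psi_\alpha$ and using the quadratic scaling $Q_g(a\phi)=a^2Q_g(\phi)$, the coefficients are absorbed into the spinors, which gives \eqref{eq:EL} with $u$ normalized. The Carathéodory bound gives finitely many spinors ($m\le\dim E$ after diagonalization). Conversely, given such $\varphi_\alpha$, the normalization $\sum\|\varphi_\alpha\|^2=1$ is forced by taking traces. Indeed, integrating the trace of \eqref{eq:EL}, the right side gives $\lambda_1(D_g)\int u^2=\lambda_1(D_g)$ by \eqref{eq:TL-trace}. On the left, $\tr Q_g(\varphi)=\operatorname{Re}\langle D_g\varphi,\varphi\rangle=\lambda_1(D_g)|\varphi|^2$, which yields $\sum\|\varphi_\alpha\|^2=1$. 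Reversing the diagonalization then gives a convex combination, so $0$ lies in the Clarke differential.

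\emph{Main obstacle.} The delicate step is justifying that the Clarke superdifferential of $\lambda_1(D_g)^2$ is exactly this convex hull, including the off-diagonal terms, and that it is attained in $(\mathcal C^s)^*$ rather than only as a weak-$*$ closure. I would first try to cite \cite[Theorem~3.13]{PetridesTewodrose2024}, whose perturbation argument for clusters of eigenvalues applies verbatim to the self-adjoint operators $D_g$ conjugated to a fixed Hilbert space via the Bourguignon--Gauduchon identification. I would then check that the bilinear map $(\phi,\phi')\mapsto Q_g(\phi,\phi')$ is continuous on the compact set of unit vectors in $E$, so that the convex hull is already closed.
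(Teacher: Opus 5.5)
Your proposal is correct and follows essentially the paper's route. The simple eigenvalue $\lambda_1(L_g)$ contributes the smooth derivative $\mcT^{L_g}_{\lambda_1(L_g)}(u)$, and the Petrides--Tewodrose convex-hull description of the Clarke differential of $\lambda_1(D_g)^2$ then reduces criticality to a convex combination of the tensors $Q_g(\psi_\alpha)$, whose weights are absorbed into orthogonal eigenspinors; for the converse, the trace of \eqref{eq:EL} forces $\sum_\alpha\|\varphi_\alpha\|_{L^2(g)}^2=1$. The differences are cosmetic: you derive the quotient rule by hand and organize the convex combination through a density matrix on $\ker(D_g-\lambda_1(D_g))$, whereas the paper quotes \cite[Theorems~3.13 and~3.14]{PetridesTewodrose2024} directly (and your appeal to analyticity to identify $\partial_C^-$ with $\partial_C^+$ is unnecessary, since $F^\circ_-(g;h)=-F^\circ_+(g;-h)$ makes the two sets coincide for any locally Lipschitz $F$).
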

\begin{proof}
Assume first that $g$ is Clarke critical for $\mathcal H$.
Choose the positive first eigenfunction $u$ of $L_g$ with
$\int_Mu^2\,dv_g=1$. Since $\lambda_1(L_g)$ is simple,
it depends smoothly on the metric.

For $F(x,y)=x/y$, the partial derivatives at
$(\lambda_1(D_g)^2,\lambda_1(L_g))$ are
\[
\partial_1F=\frac1{\lambda_1(L_g)},
\qquad
\partial_2F=-\frac{\lambda_1(D_g)^2}{\lambda_1(L_g)^2}.
\]
By Theorem~3.13(A) of \cite{PetridesTewodrose2024}
and the first-variation formulae, there exist
$L^2(g)$-orthonormal spinors
$\psi_\alpha\in\ker(D_g-\lambda_1(D_g))$
and coefficients $c_\alpha\geq0$, $\sum_\alpha c_\alpha=1$,
such that
\[
0=
-\frac{\lambda_1(D_g)}{\lambda_1(L_g)}
\sum_\alpha c_\alpha Q_g(\psi_\alpha)
-\frac{\lambda_1(D_g)^2}{\lambda_1(L_g)^2}
T^{L_g}_{\lambda_1(L_g)}(u).
\]
Setting $\varphi_\alpha=\sqrt{c_\alpha}\psi_\alpha$
and discarding zero terms gives \eqref{eq:EL}, with
pairwise $L^2(g)$-orthogonal spinors.

Conversely, assume \eqref{eq:EL}.
Taking its trace gives
$\sum_\alpha|\varphi_\alpha|_g^2=u^2$.
Hence, after discarding zero spinors, the quantities
\[
c_\alpha=\|\varphi_\alpha\|_{L^2(g)}^2,
\qquad
\psi_\alpha=\frac{\varphi_\alpha}{\sqrt{c_\alpha}}
\]
satisfy $\sum_\alpha c_\alpha=1$ and
$\|\psi_\alpha\|_{L^2(g)}=1$.
Equation~\eqref{eq:EL} therefore yields the same weighted
first-variation identity as above.

The denominator eigenvalue is simple, while the numerator
is at the beginning of its multiplicity block and
$\partial_1F>0$. Thus the sufficient condition of
\cite[Theorem~3.14]{PetridesTewodrose2024}
applies to the same ratio, and $g$ is Clarke critical.
\end{proof}

\section{Proof of the main Theorem \ref{thm:main}}\label{sec:main-proof}

\begin{proof}[Proof of Theorem~\ref{thm:main}]
Assume first that $g$ is Clarke critical. By Proposition~\ref{prop:EL},
there exist $u>0$ and eigenspinors $\varphi_\alpha$ such that
\begin{equation}\label{eq:tensor}
      \sum_{\alpha=1}^m Q_g(\varphi_\alpha)
      =
      -\frac{\lambda_1(D_g)}{\lambda_1(L_g)}
      \mcT^L_{\lambda_1(L_g)}(u).
\end{equation}
Taking the trace we obtain
\[
        \sum_{\alpha=1}^m|\varphi_\alpha|_g^2=u^2.
\]
Therefore $\psi=\left(\frac{\varphi_1}{u},\ldots,
\frac{\varphi_m}{u}\right)$ is a section of $S(\Sigma_gM^{\oplus m})$.

We first prove the harmonicity with respect to $g_u=u^{4/(n-2)}g$ of $\psi$. We start by applying Lichnerowicz formula
\[
      (\nabla^g)^*\nabla^g(u\psi_\alpha)
      +\frac14S_g u\psi_\alpha
       =
       \lambda_1(D_g)^2u\psi_\alpha,
\]
in which we expand the first term,
\[
      (\nabla^g)^*\nabla^g(u\psi_\alpha)
      =
       u(\nabla^g)^*\nabla^g\psi_\alpha
       +(\Delta_g u)\psi_\alpha
      -2\nabla^g_{\nabla^g u}\psi_\alpha.
\]
Hence
\[
            (\nabla^g)^*\nabla^g\psi_\alpha
            -2\nabla^g_{\nabla^g\log u}\psi_\alpha
                =
                \left(
              \lambda_1(D_g)^2-\frac14S_g-\frac{\Delta_gu}{u}
                \right)\psi_\alpha.
\]
Since the right-hand side is normal to the unit sphere bundle, and the formal adjoint with respect to $g_u$ satisfies
\[
     (\nabla^g)^*_{g_u}\nabla^g\psi
     =
       u^{-4/(n-2)}
      \left(
      (\nabla^g)^*\nabla^g\psi
      -2\nabla^g_{\nabla^g\log u}\psi
       \right),
\]
we deduce $\left((\nabla^g)^*_{g_u}\nabla^g\psi\right)^\top=0$. This proves the harmonicity of $\psi$.

We next consider the tensor equation. For every spinor $\varphi$,
\[
      Q_g\left(\frac{\varphi}{u}\right)=u^{-2}Q_g(\varphi).
\]
Indeed, the derivative of $u^{-1}$ produces terms containing
$\operatorname{Re}\langle X\cdot\varphi,\varphi\rangle_g$, which vanish.
Therefore, by \eqref{eq:tensor},
\[
      Q_g(\psi)
      =
      u^{-2}\sum_{\alpha=1}^m Q_g(\varphi_\alpha)
      =
      -\frac{\lambda_1(D_g)}{\lambda_1(L_g)}
     u^{-2}\mcT^L_{\lambda_1(L_g)}(u)
      =
      c_n\frac{\lambda_1(D_g)}{\lambda_1(L_g)}\Ric_{g_u}.
\]

Conversely, assume that such $u$ and $\varphi_\alpha$ are given by Theorem $\ref{thm:main}$. By
rescaling $u$ and the $\varphi_\alpha$ by the same positive constant, we may
assume $\int_Mu^2dv_g=1$. Since $\psi$ is spherical,
\[
     \sum_{\alpha=1}^m|\varphi_\alpha|_g^2=u^2.
\]
Using again $Q_g(\varphi_\alpha/u)=u^{-2}Q_g(\varphi_\alpha)$ and the tensor
equation, we compute
\[
      \sum_{\alpha=1}^mQ_g(\varphi_\alpha)
      =
     u^2Q_g(\psi)
     =
     c_n\frac{\lambda_1(D_g)}{\lambda_1(L_g)}u^2\Ric_{g_u}
     =
     -\frac{\lambda_1(D_g)}{\lambda_1(L_g)}
     \mcT^L_{\lambda_1(L_g)}(u),
\]
Proposition~\ref{prop:EL}
then implies that $g$ is Clarke critical.
\end{proof}

\section{Consequences for Hijazi's inequality}\label{sec:consequences}

 \subsection{The twistor operator} The harmonic spinorial section in Theorem \ref{thm:main} also provides a geometric interpretation of
the defect in Hijazi's inequality through the twistor operator \(P^{g}\)  defined as
\[
P^{g}_X\varphi
:=
\nabla^{g}_X\varphi
+
\frac1n X\cdot D_{g}\varphi.
\]
We also write $(P^g\varphi)(X)=P_X^g\varphi$, so that
$|P^g\varphi|_g^2=\sum_i|P_{e_i}^g\varphi|_g^2$
in a local orthonormal frame.

\begin{corollary}\label{cor:deffect}
Let \(g\in\M_+\) be Clarke critical for \(\mathcal H\), and let
\(u>0\), \(g_u=u^{4/(n-2)}g\), and $\psi=
\left(
\frac{\varphi_1}{u},\ldots,\frac{\varphi_m}{u}
\right)$ be given by Theorem~\ref{thm:main}. Then
\[
\begin{aligned}
&\sum_{\alpha=1}^m
\int_M
\left|
P^{g_u}
\left(
u^{-\frac1{n-2}}\psi_\alpha
\right)
\right|^2_{g_u}
\,dv_{g_u}
\\
&\qquad =
\frac{n-1}{n}\lambda_1(L_g)
\left(
\frac{\lambda_1(D_g)^2}{\lambda_1(L_g)}
-
\frac{n}{n-2}
\right)
\int_M u^{-\frac6{n-2}}\,dv_{g_u},
\end{aligned}
\]
\end{corollary}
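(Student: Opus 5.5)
The plan is to work directly in the conformal metric $g_u=u^{4/(n-2)}g$ and apply the integrated Lichnerowicz formula and the twistor identity there. Harmonicity of $\psi$ and the full tensorial identity \eqref{eq:einstein-dirac-section} should not be needed; only the trace information from Theorem~\ref{thm:main} enters, namely $\sum_\alpha|\varphi_\alpha|_g^2=u^2$ and $c_nS_{g_u}=\lambda_1(L_g)u^{-4/(n-2)}$ from \eqref{eq:TL-Ric-full}.

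\emph{Step 1: identify the spinors.} Since $\psi_\alpha=\varphi_\alpha/u$, we have
\[
\hat\varphi_\alpha:=u^{-\frac1{n-2}}\psi_\alpha=f^{-\frac{n-1}{2}}\varphi_\alpha,\qquad f=u^{\frac{2}{n-2}},\quad g_u=f^2g .
\]
This is exactly the conformal weight for the Dirac operator under the standard identification $\Sigma_gM\cong\Sigma_{g_u}M$. The conformal covariance $D_{f^2g}(f^{-\frac{n-1}2}\varphi)=f^{-\frac{n+1}2}D_g\varphi$ therefore gives
\[
D_{g_u}\hat\varphi_\alpha=\lambda_1(D_g)\,f^{-1}\hat\varphi_\alpha=\lambda_1(D_g)\,u^{-\frac{2}{n-2}}\hat\varphi_\alpha .
\]
Pointwise norms are preserved by the identification, so
\[
\sum_\alpha|\hat\varphi_\alpha|^2_{g_u}=u^{-\frac{2}{n-2}}\sum_\alpha|\psi_\alpha|^2=u^{-\frac{2}{n-2}},
\]
because $\psi$ takes values in the unit sphere bundle.

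\emph{Step 2: twistor and Lichnerowicz identities on $(M,g_u)$.} For any spinor $\phi$ one has the pointwise identity $|P\phi|^2=|\nabla\phi|^2-\frac1n|D\phi|^2$, which follows from expanding the square and using Clifford relations. One also has the integrated Lichnerowicz formula $\int|\nabla\phi|^2=\int|D\phi|^2-\frac14\int S|\phi|^2$. Combining the two in the metric $g_u$ gives
\[
\int_M|P^{g_u}\hat\varphi_\alpha|^2_{g_u}\,dv_{g_u}=\frac{n-1}{n}\int_M|D_{g_u}\hat\varphi_\alpha|^2\,dv_{g_u}-\frac14\int_M S_{g_u}|\hat\varphi_\alpha|^2\,dv_{g_u}.
\]

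\emph{Step 3: sum over $\alpha$ and insert the data.} By Step~1,
\[
\sum_\alpha|D_{g_u}\hat\varphi_\alpha|^2=\lambda_1(D_g)^2u^{-\frac6{n-2}}.
\]
By \eqref{eq:TL-Ric-full},
\[
\frac14S_{g_u}\sum_\alpha|\hat\varphi_\alpha|^2=\frac{\lambda_1(L_g)}{4c_n}u^{-\frac6{n-2}}=\frac{n-1}{n-2}\lambda_1(L_g)\,u^{-\frac6{n-2}} .
\]
Hence the left-hand side equals
\[
\Bigl(\frac{n-1}{n}\lambda_1(D_g)^2-\frac{n-1}{n-2}\lambda_1(L_g)\Bigr)\int_Mu^{-\frac6{n-2}}dv_{g_u}.
\]
Factoring out $\frac{n-1}{n}\lambda_1(L_g)$ gives exactly the right-hand side of Corollary~\ref{cor:deffect}.

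\emph{Main obstacle.} The only delicate point is Step~1: checking the exact conformal weights, i.e.\ that $u^{-1/(n-2)}\psi_\alpha$ is precisely $f^{-(n-1)/2}\varphi_\alpha$ for $g_u=f^2g$. One must also confirm that the identification preserves pointwise norms, which it does since it is fiberwise isometric. Once the exponents match, the remaining steps are bookkeeping with the constant $1/(4c_n)=(n-1)/(n-2)$.
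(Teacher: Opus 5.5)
Your proposal is correct and follows the paper's proof essentially step for step. The paper also sets $\chi_\alpha=u^{-1/(n-2)}\psi_\alpha$, uses the conformal covariance of the Dirac operator and the identity $\sum_\alpha|\chi_\alpha|_{g_u}^2=u^{-2/(n-2)}$, combines the pointwise twistor decomposition with the integrated Lichnerowicz formula in $g_u$, and concludes with $c_nS_{g_u}=\lambda_1(L_g)u^{-4/(n-2)}$; your explicit checks that $u^{-1/(n-2)}\psi_\alpha=f^{-(n-1)/2}\varphi_\alpha$ with $f=u^{2/(n-2)}$ and that $1/(4c_n)=(n-1)/(n-2)$ only make explicit what the paper leaves implicit.
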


In other terms, the
defect $\frac{\lambda_1(D_g)^2}{\lambda_1(L_g)}
-
\frac{n}{n-2}$
is exactly measured by the twistor energy of the associated spherical section of the critical metric $g$. Therefore the rigidity in Hijazi's inequality forces
$P^{g_u}
\left(
u^{-\frac1{n-2}}\psi_\alpha
\right)
=0$
for every $\alpha$, meaning that all the rescaled spinors $\chi_\alpha = u^{-1/(n-2)}\psi_\alpha$ are twistor spinors. Since they also satisfy
$D_{g_u}\chi_\alpha
=\lambda_1(D_g)u^{-2/(n-2)}\chi_\alpha$,
the twistor equation reduces to
\[
      \nabla^{g_u}_X \chi_\alpha
      =
      -\frac{\lambda_1(D_g)}{n}u^{-2/(n-2)} X\cdot \chi_\alpha.
\]
Taking the derivative of the squared norm $\sum_\alpha|\chi_\alpha|_{g_u}^2=u^{-2/(n-2)}$ and using the skew-Hermitian character of the Clifford multiplication immediately gives $d(u^{-2/(n-2)})=0$. Hence $u$ is constant, and the non-zero components $\chi_\alpha$ are real Killing spinors. This recovers the equality case of Hijazi’s inequality.

\begin{proof}[Proof of Corollary \ref{cor:deffect}]
Let us denote by $\chi_\alpha = u^{-1/(n-2)}\psi_\alpha$ the conformally rescaled spinors.
Under the conformal identification of spinor bundles, the Dirac operator transforms as
\[
     D_{g_u}(\chi_\alpha)
     =
     \lambda_1(D_g)u^{-2/(n-2)} \chi_\alpha,
\]
and the pointwise squared norm satisfies
\[
     \sum_{\alpha=1}^m |\chi_\alpha|_{g_u}^2
     =
     u^{-2/(n-2)} \sum_{\alpha=1}^m |\psi_\alpha|_g^2
      =
        u^{-2/(n-2)}.
\]
For any spinor $\chi$, the twistor decomposition implies
\[
     |\nabla^{g_u}\chi|_{g_u}^2
     = |P^{g_u}\chi|_{g_u}^2 + \frac1n|D_{g_u}\chi|_{g_u}^2.
\]
Integrating this identity over $M$ and using integration by parts in Lichnerowicz formula, we obtain
\[
     \int_M|P^{g_u}\chi|_{g_u}^2\,dv_{g_u}
      =
      \frac{n-1}{n}\int_M|D_{g_u}\chi|_{g_u}^2\,dv_{g_u}
     -
     \frac14\int_M S_{g_u}|\chi|_{g_u}^2\,dv_{g_u}.
\]
Applying this to each $\chi_\alpha$ and summing over $\alpha$, we deduce
\begin{align*}
\sum_{\alpha=1}^m
\int_M
|P^{g_u}(\chi_\alpha)|_{g_u}^2\,dv_{g_u}
&=
\frac{n-1}{n} \lambda_1(D_g)^2 \int_M u^{-4/(n-2)} \sum_{\alpha=1}^m |\chi_\alpha|_{g_u}^2 \,dv_{g_u} \\
&\quad - \frac14 \int_M S_{g_u} \sum_{\alpha=1}^m |\chi_\alpha|_{g_u}^2 \,dv_{g_u} \\
&=
\left(
\frac{n-1}{n}\lambda_1(D_g)^2
-
\frac{\lambda_1(L_g)}{4c_n}
\right)
\int_M u^{-6/(n-2)}\,dv_{g_u},
\end{align*}
where in the last step we used the conformal constraint $c_nS_{g_u}=\lambda_1(L_g)u^{-4/(n-2)}$. Hence the asserted identity is deduced after rearranging the terms. 
\end{proof}

\subsection{A refined rank Hijazi inequality}
The next two corollaries \ref{cor:anisotropic-hijazi} and \ref{cor:finiterank} are a rank-refined Hijazi inequality for critical metrics, in the spirit of \cite[Corollary 3.2]{eddaoudi2026harmoniceinsteinmapscriticalmetrics}.

\begin{corollary}
\label{cor:anisotropic-hijazi}
If \(g\in\M_+\) is Clarke critical for \(\mathcal H\), then
\[
      \frac1n
      \leq
      \frac{
      \displaystyle
      \int_M |\Ric_{g_u}|_{g_u}^2
      u^{\frac2{n-2}}\,dv_{g_u}
      }{
      \displaystyle
      \int_M S_{g_u}^2
      u^{\frac2{n-2}}\,dv_{g_u}
      }
      <1
\]
and
\[
      \frac{\lambda_1(D_g)^2}{\lambda_1(L_g)}
       \geq
       \frac{
      1
      }{
       4c_n
       \left(
      1-
      \displaystyle
      \frac{
      \int_M |\Ric_{g_u}|_{g_u}^2
       u^{\frac2{n-2}}\,dv_{g_u}
       }{
       \int_M S_{g_u}^2
      u^{\frac2{n-2}}\,dv_{g_u}
      }
      \right)
       }.
\]
\end{corollary}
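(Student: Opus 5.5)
The plan is to combine three ingredients: the conformally covariant form of the energy--momentum tensor, a pointwise Bessel-type bound on its trace-free part by the twistor energy, and the integral identity of Corollary~\ref{cor:deffect}. The lower bound $\frac1n$ is immediate, since $|\Ric_{g_u}|_{g_u}^2\geq \frac1n S_{g_u}^2$ holds pointwise. So the work lies in the upper bound $<1$ and in the eigenvalue estimate, and both will come out of a single inequality.

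\textbf{Step 1: pass to $g_u$.} Set $\chi_\alpha=u^{-1/(n-2)}\psi_\alpha=u^{-\frac{n-1}{n-2}}\varphi_\alpha$. With $f=u^{2/(n-2)}$ we have $f^{-(n-1)/2}=u^{-\frac{n-1}{n-2}}$. The conformal rule $Q_h(f^{-\frac{n-1}{2}}\phi)=f^{-(n-2)}Q_g(\phi)$ together with Theorem~\ref{thm:main} then gives
\[
\sum_\alpha Q_{g_u}(\chi_\alpha)=u^{-2}\sum_\alpha Q_g(\varphi_\alpha)=Q_g(\psi)=c_n\frac{\lambda_1(D_g)}{\lambda_1(L_g)}\Ric_{g_u}.
\]

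\textbf{Step 2: pointwise bound on the trace-free part.} Write $\nabla^{g_u}_X\chi=P^{g_u}_X\chi-\frac1nX\cdot D_{g_u}\chi$ and use $D_{g_u}\chi_\alpha=\mu\chi_\alpha$ with $\mu=\lambda_1(D_g)u^{-2/(n-2)}$. The $D$-part contributes $\operatorname{Re}\langle e_i\cdot(-\frac1n e_j\cdot\mu\chi),\chi\rangle=\frac{\mu}{n}\delta_{ij}|\chi|^2$. This is pure trace, so the trace-free part $Q^0$ of $\sum_\alpha Q_{g_u}(\chi_\alpha)$ is the trace-free symmetrization of the matrix
\[
A_{ij}=\sum_\alpha\operatorname{Re}\langle e_i\cdot P_{e_j}\chi_\alpha,\chi_\alpha\rangle .
\]
Symmetrizing and removing the trace are orthogonal projections, so $|Q^0|^2\leq\sum_{ij}A_{ij}^2$.

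For each fixed $j$, the vectors $(e_i\cdot\chi_\alpha)_\alpha$, $i=1,\dots,n$, are pairwise orthogonal in the real inner product $\operatorname{Re}\langle\cdot,\cdot\rangle$ on $\Sigma M^{\oplus m}$, because $e_i e_k+e_ke_i=-2\delta_{ik}$ and Clifford multiplication is skew. Each has squared norm $\sum_\alpha|\chi_\alpha|^2=u^{-2/(n-2)}$. Bessel's inequality therefore gives $\sum_iA_{ij}^2\leq u^{-2/(n-2)}\sum_\alpha|P_{e_j}\chi_\alpha|^2$, and hence
\[
|Q^0|_{g_u}^2\leq u^{-2/(n-2)}\sum_\alpha|P^{g_u}\chi_\alpha|_{g_u}^2 .
\]
I expect this step to be the main obstacle. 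The delicate points are the correct normalization, namely that it is the sum over $\alpha$ of the $|\chi_\alpha|^2$ that enters, and the check that only the twistor part survives in the trace-free component.

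\textbf{Step 3: integrate and compare.} Multiply by $u^{2/(n-2)}$, integrate against $dv_{g_u}$, and apply Corollary~\ref{cor:deffect}. Using $|Q^0|^2=(c_n\lambda_1(D_g)/\lambda_1(L_g))^2(|\Ric_{g_u}|^2-\frac1nS_{g_u}^2)$ and $S_{g_u}=\lambda_1(L_g)u^{-4/(n-2)}/c_n$, set
\[
I=\int_Mu^{-6/(n-2)}dv_{g_u}=\frac{c_n^2}{\lambda_1(L_g)^2}\int_MS_{g_u}^2u^{2/(n-2)}dv_{g_u},
\]
and let $R$ denote the ratio appearing in the statement. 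The integrated inequality then reads
\[
\lambda_1(D_g)^2\Bigl(R-\frac1n\Bigr)I\leq\frac{n-1}{n}\Bigl(\lambda_1(D_g)^2-\frac{n}{n-2}\lambda_1(L_g)\Bigr)I ,
\]
that is,
\[
\lambda_1(D_g)^2(1-R)\geq\frac{n-1}{n-2}\lambda_1(L_g)=\frac{\lambda_1(L_g)}{4c_n}.
\]
Since the right-hand side is positive, this forces $R<1$. Dividing by $1-R$ yields the stated lower bound for $\mathcal H(g)$.
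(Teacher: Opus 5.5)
Your proof is correct and is essentially the paper's argument. The paper bounds the full tensor, $|\sum_\alpha Q_{g_u}(\chi_\alpha)|_{g_u}^2\leq(\sum_\alpha|\nabla^{g_u}\chi_\alpha|_{g_u}^2)(\sum_\alpha|\chi_\alpha|_{g_u}^2)$, and integrates the Lichnerowicz formula, whereas you remove the pure-trace Dirac part, bound $Q^0$ by the twistor energy, and invoke Corollary~\ref{cor:deffect}. Since both sides of the two pointwise inequalities differ by the same term $\frac{\mu^2}{n}\bigl(\sum_\alpha|\chi_\alpha|_{g_u}^2\bigr)^2$ with $\mu=\lambda_1(D_g)u^{-2/(n-2)}$, the two inequalities are equivalent and both give $\lambda_1(D_g)^2(1-R)\geq\lambda_1(L_g)/(4c_n)$; your Bessel argument with the orthogonal family $(e_i\cdot\chi_\alpha)_\alpha$ also makes explicit why the paper's ``Cauchy--Schwarz'' step has no extra factor $n$.
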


\begin{proof}[Proof of Corollary~\ref{cor:anisotropic-hijazi}]
Let us denote again $\chi_\alpha = u^{-1/(n-2)}\psi_\alpha$. These conformally rescaled spinors satisfy
\[
      D_{g_u}(\chi_\alpha)
       =
       \lambda_1(D_g)u^{-2/(n-2)} \chi_\alpha,
\]
\[
       \sum_{\alpha=1}^m |\chi_\alpha|_{g_u}^2
       = u^{-2/(n-2)},
\]
and their spinorial energy-momentum tensor transforms as
\[
       \sum_{\alpha=1}^m Q_{g_u}(\chi_\alpha)
       = Q_g(\psi)
      = c_n\frac{\lambda_1(D_g)}{\lambda_1(L_g)}\Ric_{g_u}.
\]
We evaluate the pointwise squared norm of the energy-momentum tensor. By the Cauchy-Schwarz inequality, the norm of $Q_{g_u}$ acting on a family of spinors satisfies $|\sum_\alpha Q_{g_u}(\chi_\alpha)|_{g_u}^2 \le \big(\sum_\alpha |\nabla^{g_u} \chi_\alpha|_{g_u}^2\big) \big(\sum_\alpha |\chi_\alpha|_{g_u}^2\big)$. Substituting the expressions above yields
\[
       c_n^2\frac{\lambda_1(D_g)^2}{\lambda_1(L_g)^2}
       |\Ric_{g_u}|_{g_u}^2
       \leq
       \left( \sum_{\alpha=1}^m \left|\nabla^{g_u} \chi_\alpha \right|_{g_u}^2 \right) u^{-2/(n-2)}.
\]
Multiplying both sides by $u^{2/(n-2)}$ and integrating over $M$, we obtain
\[
    c_n^2\frac{\lambda_1(D_g)^2}{\lambda_1(L_g)^2}
    \int_M |\Ric_{g_u}|_{g_u}^2u^{2/(n-2)}\,dv_{g_u}
      \leq
       \int_M \sum_{\alpha=1}^m \left|\nabla^{g_u} \chi_\alpha \right|_{g_u}^2 \,dv_{g_u}.
\]
Using the Lichnerowicz formula and an integration by parts as in the proof of
the previous corollary, we obtain
\[
\begin{aligned}
    \int_M\sum_{\alpha=1}^m
    |\nabla^{g_u}\chi_\alpha|_{g_u}^2\,dv_{g_u}
    &=
    \int_M\sum_{\alpha=1}^m
    |D_{g_u}\chi_\alpha|_{g_u}^2\,dv_{g_u}
    -\frac14\int_M S_{g_u}
    \sum_{\alpha=1}^m|\chi_\alpha|_{g_u}^2\,dv_{g_u}\\
    &=
    \left(\lambda_1(D_g)^2-\frac{\lambda_1(L_g)}{4c_n}\right)
    \int_Mu^{-6/(n-2)}\,dv_{g_u},
\end{aligned}
\]
where we used $D_{g_u}\chi_\alpha
=\lambda_1(D_g)u^{-2/(n-2)}\chi_\alpha$,
$\sum_\alpha|\chi_\alpha|_{g_u}^2=u^{-2/(n-2)}$,
and $c_nS_{g_u}=\lambda_1(L_g)u^{-4/(n-2)}$.
On the other hand, we also have
\[
       \int_MS_{g_u}^2 u^{2/(n-2)}\,dv_{g_u}
    =
       \frac{\lambda_1(L_g)^2}{c_n^2}
       \int_Mu^{-6/(n-2)}\,dv_{g_u}.
\]
Substituting back into the last inequality, we obtain
\[
       \lambda_1(D_g)^2
       \int_M |\Ric_{g_u}|_{g_u}^2u^{2/(n-2)}\,dv_{g_u}
       \leq
       \left(\lambda_1(D_g)^2-\frac{\lambda_1(L_g)}{4c_n}\right)
       \int_MS_{g_u}^2u^{2/(n-2)}\,dv_{g_u},
\]
which is the stated lower bound. The strict inequality bounding the curvature ratio by $1$ comes from the fact that $\lambda_1(L_g) > 0$. The lower bound $1/n$ simply follows from the universal pointwise algebraic inequality $|\Ric_{g_u}|_{g_u}^2 \geq S_{g_u}^2/n$.

\end{proof}

\begin{corollary}\label{cor:finiterank}
Let $g\in\mathcal M_+$ be Clarke critical for $\mathcal H$, with
$u$, $g_u$ and $\varphi_\alpha$ as in Theorem~\ref{thm:main}.
If $\operatorname{rank}\Ric_{g_u}\leq r$ for some $2\leq r\leq n$, then
\begin{equation}\label{eq: rankrefined}
    \frac{\lambda_1(D_g)^2}{\lambda_1(L_g)}
    \geq \frac{r(n-1)}{(n-2)(r-1)}.
\end{equation}
Moreover, if equality holds, then $u$ is constant and the spinors
$\varphi_\alpha$ are Killing in the Ricci-positive directions
and parallel in the kernel of $\Ric_{g_u}$.
\end{corollary}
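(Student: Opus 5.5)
The plan is to feed a pointwise rank inequality into Corollary~\ref{cor:anisotropic-hijazi}. If $\Ric_{g_u}$ has rank at most $r$ at each point, diagonalize it in a $g_u$-orthonormal frame. At most $r$ eigenvalues $\mu_1,\dots,\mu_r$ are nonzero, and Cauchy--Schwarz gives
\[
S_{g_u}^2=\Bigl(\sum_{i=1}^r\mu_i\Bigr)^2\leq r\sum_{i=1}^r\mu_i^2=r|\Ric_{g_u}|_{g_u}^2 .
\]
Hence the weighted ratio in Corollary~\ref{cor:anisotropic-hijazi} is at least $1/r$. Since $1-\frac1r=\frac{r-1}{r}$ and $\frac1{4c_n}=\frac{n-1}{n-2}$, the lower bound of that corollary becomes
\[
\frac{\lambda_1(D_g)^2}{\lambda_1(L_g)}\ge\frac{r(n-1)}{(n-2)(r-1)},
\]
which is \eqref{eq: rankrefined}. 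The hypothesis $r\geq 2$ is automatic, because $S_{g_u}=\lambda_1(L_g)u^{-4/(n-2)}/c_n>0$ forces $\Ric_{g_u}\neq0$. Moreover $r=1$ would contradict the strict inequality $<1$ in Corollary~\ref{cor:anisotropic-hijazi}.

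For the equality case, I would go back through the proof of Corollary~\ref{cor:anisotropic-hijazi}. Equality in \eqref{eq: rankrefined} forces equality in two places. First, the pointwise Cauchy--Schwarz bound $S^2\le r|\Ric|^2$ must be an equality, so at each point the nonzero Ricci eigenvalues are all equal to $S_{g_u}/r$. Second, the integrated inequality $|\sum_\alpha Q_{g_u}(\chi_\alpha)|^2\le(\sum|\nabla\chi_\alpha|^2)(\sum|\chi_\alpha|^2)$ must hold with equality pointwise. To use the second condition, I would make the Cauchy--Schwarz step explicit. The cleanest route is to view $Q$ as the symmetrization of the bilinear pairing $(X,Y)\mapsto \operatorname{Re}\langle X\cdot\nabla_Y\chi_\alpha,\chi_\alpha\rangle$. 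Equality then forces $\nabla^{g_u}_Y\chi_\alpha$ to lie in the span of $\{X\cdot\chi_\alpha\}$, in the pattern dictated by $\sum Q_{g_u}(\chi_\alpha)$. Concretely, one should get
\[
\nabla^{g_u}_Y\chi_\alpha=-\frac{1}{\sum_\beta|\chi_\beta|^2}\,\Bigl(\sum_\beta Q_{g_u}(\chi_\beta)\Bigr)(Y)\cdot\chi_\alpha .
\]
Combined with $\sum Q_{g_u}(\chi_\alpha)=c_n\frac{\lambda_1(D_g)}{\lambda_1(L_g)}\Ric_{g_u}$, this says that $\nabla_Y\chi_\alpha$ is a multiple of $Y\cdot\chi_\alpha$ when $Y$ is in the Ricci-positive eigenspace, and vanishes when $Y\in\ker\Ric_{g_u}$. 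That is exactly the claimed Killing/parallel behaviour.

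To show $u$ is constant, I would differentiate $\sum_\alpha|\chi_\alpha|^2_{g_u}=u^{-2/(n-2)}$. Using the formula above, $Y(|\chi_\alpha|^2)=2\operatorname{Re}\langle\nabla_Y\chi_\alpha,\chi_\alpha\rangle$ is a real multiple of $\operatorname{Re}\langle Z\cdot\chi_\alpha,\chi_\alpha\rangle$ for some vector $Z$, and this vanishes by skew-Hermitian symmetry, just as in the discussion after Corollary~\ref{cor:deffect}. So $d(u^{-2/(n-2)})=0$ and $u$ is constant. The statements about $\chi_\alpha$ then transfer to $\varphi_\alpha$ up to constant factors, with $g_u$ homothetic to $g$.

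The main obstacle is that the paper uses the Cauchy--Schwarz inequality for $Q$ only loosely. I need a precise version with a clear equality case. I would try to write $\sum_\alpha Q(\chi_\alpha)(X,Y)$ as $\operatorname{Re}\langle A(X),B(Y)\rangle$ on $T^*M\otimes\Sigma^{\oplus m}$, which would require a careful normalization. A sharp constant might be needed there, and if it is off, the equality characterization would have to be adjusted accordingly.
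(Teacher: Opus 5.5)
Your proposal is correct and follows the paper's proof. Both insert the pointwise bound $S_{g_u}^2\le r|\Ric_{g_u}|_{g_u}^2$ into Corollary~\ref{cor:anisotropic-hijazi}, then use equality to force $\nabla^{g_u}_X\chi_\alpha=-A(X)\cdot\chi_\alpha$ with $A=u^{2/(n-2)}\bigl(\sum_\beta Q_{g_u}(\chi_\beta)\bigr)^\sharp=c_n\frac{\lambda_1(D_g)}{\lambda_1(L_g)}u^{2/(n-2)}\Ric_{g_u}^\sharp$, and use skew-Hermitian Clifford multiplication to get $u$ constant. The paper reaches that equation by completing the square: it shows $\sum_{\alpha,i}|\nabla^{g_u}_{e_i}\chi_\alpha+A(e_i)\cdot\chi_\alpha|^2$ equals the Cauchy--Schwarz defect, and this also settles your worry about the constant, which is exactly $1$ because the vectors $e_i\cdot\chi$ (Clifford multiplication acting diagonally on $\Sigma^{\oplus m}$) are $\operatorname{Re}\langle\cdot,\cdot\rangle$-orthogonal of norm $|\chi|$, so Bessel's inequality holds with equality precisely when each $\nabla_{e_j}\chi$ lies in their real span, as you predicted.
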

\begin{proof}
Since $\operatorname{rank}\Ric_{g_u}\leq r$, by the Cauchy–Schwarz inequality we have
\[
    |\Ric_{g_u}|_{g_u}^2\geq\frac1r S_{g_u}^2.
\]
Combining this with Corollary~\ref{cor:anisotropic-hijazi} yields 
$$ \frac{\lambda_1(D_g)^2}{\lambda_1(L_g)}
    \geq \frac{r(n-1)}{(n-2)(r-1)}.$$
Assume now that equality holds in \eqref{eq: rankrefined}, and let
$\chi_\alpha=u^{-1/(n-2)}\psi_\alpha$ be the rescaled spinors as in the preceding proof.
Then equality holds in both inequalities used above, and we obtain pointwise
\[
    |\Ric_{g_u}|_{g_u}^2=\frac1r S_{g_u}^2,
    \qquad
    \sum_{\alpha=1}^m|\nabla^{g_u}\chi_\alpha|_{g_u}^2
    =c_n^2\frac{\lambda_1(D_g)^2}{\lambda_1(L_g)^2}
      |\Ric_{g_u}|_{g_u}^2u^{2/(n-2)}.
\]
Since $S_{g_u}>0$, equality in the rank estimate implies that
$\Ric_{g_u}$ has rank exactly $r$ everywhere and that all its
nonzero eigenvalues equal $S_{g_u}/r$. Thus
\[
    \Ric_{g_u}^{\sharp}=\frac{S_{g_u}}r\Pi,
\]
where $\Pi$ is the $g_u$-orthogonal projection onto the image of the Ricci endomorphism $\Ric_{g_u}^{\sharp}$.

Next we define the symmetric endomorphism
\[
    A=c_n\frac{\lambda_1(D_g)}{\lambda_1(L_g)}
      u^{2/(n-2)}\Ric_{g_u}^{\sharp}.
\]
Using the identities for $\sum_\alpha|\chi_\alpha|_{g_u}^2$
and $\sum_\alpha Q_{g_u}(\chi_\alpha)$ established above,
by a direct expansion in a local $g_u$-orthonormal frame $(e_i)$ we have
\[
\begin{aligned}
    &\sum_{\alpha=1}^m\sum_{i=1}^n
    \left|\nabla_{e_i}^{g_u}\chi_\alpha
          +A(e_i)\cdot_{g_u}\chi_\alpha\right|_{g_u}^2\\
    &\qquad=
    \sum_{\alpha=1}^m|\nabla^{g_u}\chi_\alpha|_{g_u}^2
    -c_n^2\frac{\lambda_1(D_g)^2}{\lambda_1(L_g)^2}
      |\Ric_{g_u}|_{g_u}^2u^{2/(n-2)}
    =0.
\end{aligned}
\]
Therefore,
\[
    \nabla_X^{g_u}\chi_\alpha=-A(X)\cdot_{g_u}\chi_\alpha.
\]
By the skew-Hermitian character of Clifford multiplication,
\[
    X\left(\sum_{\alpha=1}^m|\chi_\alpha|_{g_u}^2\right)
    =-2\sum_{\alpha=1}^m
      \operatorname{Re}
      \langle A(X)\cdot_{g_u}\chi_\alpha,\chi_\alpha\rangle_{g_u}
    =0.
\]
Since $\sum_\alpha|\chi_\alpha|_{g_u}^2=u^{-2/(n-2)}$,
the function $u$ is constant.

Finally, using again the identity $c_nS_{g_u}=\lambda_1(L_g)u^{-4/(n-2)}$, we obtain
\[
    A=\frac{\lambda_1(D_g)}r u^{-2/(n-2)}\Pi.
\]
Under the conformal identification, the spin connections of $g$
and $g_u$ agree because $u$ is constant, while Clifford
multiplication satisfies $X\cdot_{g_u}=u^{2/(n-2)}X\cdot_g$.
Recalling that $\chi_\alpha=u^{-(n-1)/(n-2)}\varphi_\alpha$,
we therefore conclude that
\[
    \nabla_X^g\varphi_\alpha
    =-\frac{\lambda_1(D_g)}r\Pi(X)\cdot_g\varphi_\alpha.
\]
Thus the spinors are Killing in the Ricci-positive directions
and parallel in the kernel of $\Ric_{g_u}$.
\end{proof}

\begin{remark}
The constant in the rank-refined estimate
in Equation \eqref{eq: rankrefined} is sharp. Indeed, let
\(2\leq r\leq n\), let \((N^r,g_N)\) be a closed spin
manifold carrying a non-zero real Killing spinor with
Killing constant \(-\sqrt K/2\), and let
\((F^{n-r},g_F)\) be a closed Ricci-flat spin manifold
carrying a non-zero parallel spinor. When \(r\) is odd
and \(n-r>0\) is even, assume in addition that \(F\)
carries parallel spinors of both chiralities. The factor
\(F\) is omitted when \(r=n\).

On the product
\[
       M=N^r\times F^{n-r},
       \qquad
       g=g_N+g_F,
\]
the product construction then provides a unit spinor
\(\psi\) satisfying
\[
       \nabla^g_X\psi
       =
       -\frac{\sqrt K}{2}\,\Pi(X)\cdot\psi,
\]
where \(\Pi\) is the orthogonal projection onto \(TN\).
In the parity case specified above, the parallel spinor
on \(F\) is chosen with the chirality giving this sign.
Therefore,
\[
      D_g\psi=\frac{r\sqrt K}{2}\psi,
      \qquad
      Q_g(\psi)=\frac{\sqrt K}{2}\,g_N.
\]
The Killing spinor equation on \(N\) leads to
\[
       \Ric_g=(r-1)K\,g_N,
       \qquad
       S_g=r(r-1)K.
\]
Thus the Ricci tensor has rank \(r\), and
     $\lambda_1(L_g)=c_n r(r-1)K.$
Moreover, by Friedrich's inequality \cite{Friedrich1980} on \(N\) we obtain
\(D_N^2\geq r^2K/4\). Therefore the product formula for the square of the Dirac operator yields
\[
      D_g^2\geq\frac{r^2K}{4}.
\]
Since \(\psi\) realizes the positive eigenvalue
\(r\sqrt K/2\), we obtain
\[
      \lambda_1(D_g)=\frac{r\sqrt K}{2}.
\]

To verify that this metric is critical for the ratio $\mathcal{H}$, let
\(u=\operatorname{Vol}(M,g)^{-1/2}\) be the normalized
constant first eigenfunction of \(L_g\), and set
\(\varphi=u\psi\). Since
\[
      \mcT^{L_g}_{\lambda_1(L_g)}(u)
       =-c_nu^2\Ric_g,
\]
we have
\begin{align*}
      Q_g(\varphi)
       &=
       u^2\frac{\sqrt K}{2}\,g_N\\
    &=
       c_n\frac{\lambda_1(D_g)}{\lambda_1(L_g)}
       u^2\Ric_g\\
     &=
      -\frac{\lambda_1(D_g)}{\lambda_1(L_g)}
    \mcT^{L_g}_{\lambda_1(L_g)}(u).
\end{align*}
Proposition~\ref{prop:EL} shows that \(g\) is Clarke
critical for \(\mathcal H\). Therefore
\[
      \frac{\lambda_1(D_g)^2}{\lambda_1(L_g)}
       =
       \frac{r(n-1)}{(n-2)(r-1)}.
\]
Thus these product metrics attain equality in the
rank-refined bound. In particular, taking \(N\) to be
the round sphere of sectional curvature \(K\) and
\(F\) to be a flat torus with the trivial spin structure
provides such examples for every \(2\leq r\leq n\).
For related Dirac eigenvalue estimates on locally reducible
manifolds, see \cite{Alexandrov2007}.
\end{remark}

\section{Unboundedness of the ratio $\mathcal{H}$ in the round conformal class}\label{sec:unboundeness}

In this section we prove that the ratio  $\mathcal{H}= \frac{\lambda_1(D_g)^2}{\lambda_1(L_g)}$ cannot be bounded above in the round conformal class of the sphere. To do so, we introduce a specific family of metrics that pinches the geometry around an equatorial subsphere, and we study the asymptotic behavior of the conformal Laplacian and the Dirac operator separately. This construction allows us to construct a straightforward easy example.

\smallskip
 \subsection{Pinching a sphere along a codimension-two subsphere}
Let $S^n \subset \mathbb{R}^{n-1} \oplus \mathbb{R}^2$ and let $\Sigma = S^{n-2}$ be the totally geodesic equatorial subsphere defined by $z=0$, and let $r(x,z)=|z|=\sin d_{g_0}((x,z),\Sigma)$. For any $\epsilon > 0$, we define the conformal factor $f_\epsilon = (r^2 + \epsilon^2)^{-1/2}$ and the corresponding metric $g_\epsilon = f_\epsilon^2 g_0$.

\begin{proposition} \label{prop:laplacian_collapse}
There exists a constant $C > 0$, independent of $\epsilon$, such that the first eigenvalue of the conformal Laplacian satisfies
$$ \lambda_1(L_{g_\epsilon}) \le \frac{C}{\log(1/\epsilon)}. $$
\end{proposition}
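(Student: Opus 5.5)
The plan is to use the conformal covariance of the Yamabe operator. This turns the Rayleigh quotient of $L_{g_\epsilon}$ into a weighted quotient for $L_{g_0}$, and a single explicit test function then suffices.

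Write $g_\epsilon=f_\epsilon^2g_0=\phi_\epsilon^{4/(n-2)}g_0$ with $\phi_\epsilon=f_\epsilon^{(n-2)/2}$. The covariance relation $L_{g_\epsilon}(v)=\phi_\epsilon^{-\frac{n+2}{n-2}}L_{g_0}(\phi_\epsilon v)$ and $dv_{g_\epsilon}=\phi_\epsilon^{2n/(n-2)}dv_{g_0}$ give, for $w=\phi_\epsilon v$,
\[
\int_M vL_{g_\epsilon}v\,dv_{g_\epsilon}=\int_{\mathbb S^n}\Bigl(|dw|_{g_0}^2+\tfrac{n(n-2)}{4}w^2\Bigr)dv_{g_0},
\qquad
\int_M v^2\,dv_{g_\epsilon}=\int_{\mathbb S^n}w^2f_\epsilon^{2}\,dv_{g_0}.
\]
Here we used $c_nS_{g_0}=n(n-2)/4$ and $\phi_\epsilon^{2n/(n-2)-2}=f_\epsilon^{2}$. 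By the min--max characterization of $\lambda_1(L_{g_\epsilon})$,
\[
\lambda_1(L_{g_\epsilon})\le \inf_{w\neq0}\frac{\int_{\mathbb S^n}\bigl(|dw|^2+\frac{n(n-2)}4w^2\bigr)dv_{g_0}}{\int_{\mathbb S^n}w^2(r^2+\epsilon^2)^{-1}dv_{g_0}}.
\]

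Next we choose the test function $w\equiv1$, that is, $v=f_\epsilon^{-(n-2)/2}$, which is smooth on $M$. The numerator then equals $\frac{n(n-2)}4\operatorname{Vol}(\mathbb S^n,g_0)$, which is independent of $\epsilon$.

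The remaining and only substantive step is the lower bound $\int_{\mathbb S^n}(r^2+\epsilon^2)^{-1}dv_{g_0}\ge c\log(1/\epsilon)$. For this we use coordinates adapted to $\Sigma$. Writing a point as $(\cos\rho\,x',\sin\rho\,\theta)$ with $x'\in\mathbb S^{n-2}$, $\theta\in\mathbb S^1$ and $\rho\in[0,\pi/2]$, the round metric is $d\rho^2+\cos^2\!\rho\, g_{\mathbb S^{n-2}}+\sin^2\!\rho\,d\theta^2$ and $r=\sin\rho$. Hence $dv_{g_0}=\cos^{n-2}\!\rho\,\sin\rho\,d\rho\,d\sigma\,d\theta$. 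Restricting to $\rho\le\pi/4$, where $\cos\rho$ is bounded below and $\sin\rho\simeq\rho$, we obtain
\[
\int_{\mathbb S^n}\frac{dv_{g_0}}{r^2+\epsilon^2}\ge c\int_0^{1/2}\frac{s\,ds}{s^2+\epsilon^2}=\frac c2\log\frac{1/4+\epsilon^2}{\epsilon^2}\ge c'\log(1/\epsilon)
\]
for $\epsilon$ small. For $\epsilon$ bounded away from $0$ the statement follows from the trivial bound $\lambda_1(L_{g_\epsilon})\le C$, obtained from the same test function, after adjusting $C$.

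I do not expect any serious obstacle. The only point needing care is bookkeeping of the conformal weights, namely verifying that the denominator weight is exactly $f_\epsilon^2$ rather than $f_\epsilon^n$; it is precisely this weight that produces the logarithmic divergence, since codimension two makes $\int r^{-2}\,r\,dr$ borderline. The choice $w\equiv1$ is natural: a constant test function in the $g_\epsilon$ picture would only give an $O(1)$ bound, because the positive scalar curvature of the cap $\{r\lesssim\epsilon\}$ and the negative curvature of the quasi-$\mathbb H^{n-1}\times\mathbb S^1$ region carry comparable volume $\sim\epsilon^{2-n}$.
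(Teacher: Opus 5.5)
Your proposal is correct and is essentially the paper's proof. You test the Rayleigh quotient of $L_{g_\epsilon}$ with $v=f_\epsilon^{-(n-2)/2}=u_\epsilon^{-1}$ and use conformal covariance to turn the numerator into $\frac{n(n-2)}{4}\operatorname{Vol}(\mathbb S^n,g_0)$ and the denominator into $\int_{\mathbb S^n}(r^2+\epsilon^2)^{-1}\,dv_{g_0}$; you then get the $\log(1/\epsilon)$ lower bound from the Fermi-coordinate volume element $\cos^{n-2}\rho\,\sin\rho\,d\rho\,d\theta\,dv_{\mathbb S^{n-2}}$, exactly as the paper does, with somewhat more explicit bookkeeping of the weight $f_\epsilon^2$.
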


\begin{proof}
Let $g_\varepsilon=u_\varepsilon^{4/(n-2)}g_0$,
where $u_\varepsilon=f_\varepsilon^{(n-2)/2}$.
By the conformal covariance of the conformal
Laplacian, taking $\varphi=u_\varepsilon^{-1}$ gives
\[
\int_{S^n}\varphi L_{g_\varepsilon}\varphi\,
dv_{g_\varepsilon}
=
\int_{S^n}L_{g_0}1\,dv_{g_0}
=
\frac{n(n-2)}{4}
\operatorname{Vol}(S^n,g_0).
\]
Moreover,
\[
\int_{S^n}\varphi^2\,dv_{g_\varepsilon}
=
\int_{S^n}f_\varepsilon^2\,dv_{g_0}
=
\int_{S^n}\frac{dv_{g_0}}{r^2+\varepsilon^2}.
\]
In Fermi coordinates around
$\Sigma=S^{n-2}$, the round volume element is
\[
dv_{g_0}
=
\cos^{n-2}(\rho)\sin(\rho)\,
d\rho\,d\theta\,dv_{S^{n-2}},
\]
where $r=\sin(\rho)$. Therefore, for any
fixed sufficiently small $\delta>0$,
there exists $c>0$, independent of
$\varepsilon$, such that
\[
\int_{S^n}\frac{dv_{g_0}}{r^2+\varepsilon^2}
\geq
c\int_0^\delta
\frac{r}{r^2+\varepsilon^2}\,dr
\geq c'\log(1/\varepsilon)
\]
for all sufficiently small $\varepsilon>0$.
The variational characterization of the first
eigenvalue therefore yields
\[
\lambda_1(L_{g_\varepsilon})
\leq
\frac{n(n-2)\operatorname{Vol}(S^n,g_0)}
{4c'\log(1/\varepsilon)},
\]
which completes the proof.
\end{proof}

\begin{proposition} \label{prop:dirac_gap}
For the family of metrics $g_\epsilon$ defined above, we have
$$ \lambda_1(D_{g_\epsilon})^2 \ge \frac{1}{4}. $$
\end{proposition}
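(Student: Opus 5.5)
The plan is to compare $g_\varepsilon$ with the singular limiting metric $\hat g=r^{-2}g_0$ on $S^n\setminus\Sigma$. The point is that $\hat g$ is a Riemannian product whose circle factor carries the bounding spin structure, so its Dirac operator has a spectral gap $\tfrac12$, and this gap survives the conformal change back to $g_\varepsilon$.

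\textbf{Step 1 (the limiting geometry).} In the Fermi coordinates $(\rho,\theta,y)$ of the proof of Proposition~\ref{prop:laplacian_collapse}, with $r=\sin\rho$, one has
\[
g_0=d\rho^2+\sin^2\!\rho\, d\theta^2+\cos^2\!\rho\, g_{S^{n-2}} .
\]
Hence
\[
\hat g=r^{-2}g_0=\frac{d\rho^2+\cos^2\!\rho\, g_{S^{n-2}}}{\sin^2\!\rho}+d\theta^2 .
\]
The first summand is the hyperbolic metric on $\mathbb H^{n-1}$, written in the upper half-space model via $t=\tan\rho$ and then in polar form. So $(S^n\setminus\Sigma,\hat g)\cong \mathbb H^{n-1}\times S^1$, which is complete, and the circle has length $2\pi$.

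Since each circle $\{\theta\}$ bounds a disc in $S^n$ meeting $\Sigma$ transversally, the spin structure induced on the $S^1$ factor is the bounding (non-trivial) one. Thus $D_{S^1}$ has spectrum $\mathbb Z+\tfrac12$. The product formula $D_{\hat g}^2=D_{\mathbb H}^2\otimes 1+1\otimes D_{S^1}^2$, where the cross terms anticommute, gives
\[
\int|D_{\hat g}\phi|^2\,dv_{\hat g}\ \ge\ \tfrac14\int|\phi|^2\,dv_{\hat g}
\]
for all $\phi$ in the domain of the closure of $D_{\hat g}$. The operator $D_{\hat g}$ is essentially self-adjoint because $\hat g$ is complete. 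The inequality is first proved on compactly supported spinors by Fourier decomposition in $\theta$, then extended to the domain of the closure by density.

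\textbf{Step 2 (conformal transfer).} Write $g_\varepsilon=w^2\hat g$ with $w=r(r^2+\varepsilon^2)^{-1/2}\in(0,1]$. Let $\psi$ be an eigenspinor, $D_{g_\varepsilon}\psi=\lambda\psi$, and set $\phi=w^{(n-1)/2}\psi$ under the conformal identification. The conformal covariance of the Dirac operator gives $D_{\hat g}\phi=\lambda w\phi$. Then
\[
\tfrac14\int|\phi|^2dv_{\hat g}\le\int|D_{\hat g}\phi|^2dv_{\hat g}=\lambda^2\int w^2|\phi|^2dv_{\hat g}\le\lambda^2\int|\phi|^2dv_{\hat g},
\]
which yields $\lambda^2\ge\tfrac14$ for every eigenvalue, in particular for $\lambda_1(D_{g_\varepsilon})$.

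\textbf{Step 3 (the main obstacle: justifying Step 2 near $\Sigma$).} One must check that $\phi$ lies in $L^2(\hat g)$ with $D_{\hat g}\phi\in L^2(\hat g)$, and that $\phi$ belongs to the domain of the closure. Then the inequality of Step 1 applies to $\phi$ despite its singular behaviour at $\Sigma$.

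For integrability, $\psi$ is smooth and bounded on $S^n$, $w\sim r/\varepsilon$ near $\Sigma$, and $dv_{\hat g}=r^{-n}dv_{g_0}\sim r^{-n}\,r\,dr\,d\theta\,dy$. Hence
\[
|\phi|^2dv_{\hat g}\sim \varepsilon^{-(n-1)}\,r^{n-1}\,r^{-n}\,r\,dr=\varepsilon^{-(n-1)}\,dr ,
\]
which is integrable, and $|D_{\hat g}\phi|=\lambda w|\phi|\le\lambda|\phi|$ is then also in $L^2$.

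For the domain, I would approximate $\phi$ by $\eta_k\phi$. Here $\eta_k$ are logarithmic cutoffs in $\log r$ that are equal to $1$ on $\{r\ge 1/k\}$ and vanish near $\Sigma$. Since $|d\eta_k|_{\hat g}=r\,|d\eta_k|_{g_0}$, these cutoffs have $\hat g$-gradient $O(1/\log k)$. The error term $d\eta_k\cdot\phi$ therefore tends to $0$ in $L^2(\hat g)$, and so $\eta_k\phi\to\phi$ in the graph norm. Equivalently, one can appeal to completeness of $\hat g$, for which the maximal and minimal extensions of $D_{\hat g}$ coincide.
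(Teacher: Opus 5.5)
Your proof is correct and follows essentially the paper's route. You use the same comparison metric $r^{-2}g_0\cong\mathbb H^{n-1}\times S^1$, the same gap $\tfrac12$ from the bounding spin structure on the circle, the same cutoff density argument near $\Sigma$, and the same monotonicity of the conformal factor (your $w=rf_\varepsilon\le 1$ is exactly the paper's pair of bounds $f_\varepsilon^{-1}\ge r$, $f_\varepsilon\le r^{-1}$); the only cosmetic difference is that you test eigenspinors through $D_{\hat g}\phi=\lambda w\phi$ instead of the Rayleigh quotient written in $g_0$. One small imprecision: the hyperbolic identification is cleanly obtained via $\sinh s=\cot\rho$, which turns the first summand into $ds^2+\sinh^2 s\,g_{S^{n-2}}$, whereas the justification via $t=\tan\rho$ and the half-space model does not quite work as written, although the conclusion you state is correct.
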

\begin{proof}
Consider the singular conformal metric $g_* = r^{-2} g_0$ on $S^n \setminus \Sigma$, where we recall that $r$ denotes the extrinsic distance to $\Sigma$. First, we prove that $(S^n \setminus \Sigma, g_*)$ is isometric to the Riemannian product $\mathbb{H}^{n-1} \times S^1$. Using the Fermi coordinates around $\Sigma$ introduced previously, the round metric is written as
$$g_0 = d\rho_0^2 + \cos^2(\rho_0) g_{S^{n-2}} + \sin^2(\rho_0) d\theta^2.$$
Since $r = \sin(\rho_0)$, the singular metric is given by 
$$g_* = \frac{d\rho_0^2}{\sin^2\rho_0} + \cot^2(\rho_0) g_{S^{n-2}} + d\theta^2.$$ Introducing the new variable $\rho$ such that $\sinh \rho = \cot \rho_0$ yields
$$ g_* = d\rho^2 + \sinh^2\rho \, g_{S^{n-2}} + d\theta^2 = g_{\mathbb{H}^{n-1}} + g_{S^1}. $$
Thus, the singular space $(S^n \setminus \Sigma, g_*)$ is indeed globally isometric to $\mathbb{H}^{n-1} \times S^1$.
Now the idea is to use the decomposition of the square of the Dirac operator for a Riemannian product  $$D_{g_*}^2 = D_{\mathbb{H}^{n-1}}^2 \otimes 1 + 1 \otimes D_{S^1}^2.$$ 
The spin structure induced by the standard spin structure of $S^n$ on the normal circle $S^1$ is the bounding spin structure. Spinors therefore satisfy $\psi(\theta+2\pi) = -\psi(\theta)$, meaning $\mathrm{Spec}(D_{S^1}) = \{k + 1/2 : k \in \mathbb{Z}\}$ and in particular 
\begin{equation}\label{ineq: spect gap}
    \lambda_1(D_{S^1}^2) \ge 1/4.
\end{equation}
Since $D_{\mathbb{H}^{n-1}}^2$ is non-negative, for any compactly supported smooth spinor $\Psi \in C_c^\infty(\Sigma_{g_*}(S^n \setminus \Sigma))$, we have:
$$ \int_{S^n \setminus \Sigma} |D_{g_*} \Psi|^2 \, dv_{g_*} \ge \frac{1}{4} \int_{S^n \setminus \Sigma} |\Psi|^2 \, dv_{g_*}. $$
Let $\psi$ be a smooth spinor on $\mathbb S^n$ and set
$\Psi=r^{(n-1)/2}\psi$ under the conformal identification.
Since $\Sigma$ has codimension two, conformal covariance gives
$\Psi,D_{g_*}\Psi\in L^2(g_*)$.
Choose smooth cutoffs $\eta_\delta$ which vanish for $r\leq\delta$,
equal one for $r\geq2\delta$, and satisfy
$|d\eta_\delta|_{g_0}\leq C/\delta$. Then
\[
 \|d\eta_\delta\cdot_{g_*}\Psi\|_{L^2(g_*)}^2
 =\int_{\mathbb S^n}
 r|d\eta_\delta|_{g_0}^2|\psi|_{g_0}^2\,dv_{g_0}
 =O(\delta).
\]
The Leibniz rule therefore gives
$\eta_\delta\Psi\to\Psi$ in the graph norm of $D_{g_*}$,
so the preceding inequality also holds for $\Psi$.

Now let $g_f = f^2 g_0$ be an arbitrary conformal metric. Under the conformal identification of spinor bundles, if $\tilde{\psi} = f^{-(n-1)/2}\psi$, the conformal covariance of the Dirac operator yields $D_{g_f} \tilde{\psi} = f^{-(n+1)/2} D_{g_0} \psi$. Thus, the Rayleigh quotient of $\lambda_1(D_g)^2$ satisfies
$$ \frac{\int_{S^n} |D_{g_f} \tilde{\psi}|^2 \, dv_{g_f}}{\int_{S^n} |\tilde{\psi}|^2 \, dv_{g_f}} = \frac{\int_{S^n} f^{-1} |D_{g_0} \psi|^2 \, dv_{g_0}}{\int_{S^n} f |\psi|^2 \, dv_{g_0}}. $$
For the specific choice $f = f_\epsilon$, we have the pointwise bounds $f_\epsilon^{-1} = \sqrt{r^2 + \epsilon^2} \ge r$ and $f_\epsilon \le r^{-1}$. It follows that
$$ \frac{\int_{S^n} f_\epsilon^{-1} |D_{g_0} \psi|^2 \, dv_{g_0}}{\int_{S^n} f_\epsilon |\psi|^2 \, dv_{g_0}} \ge \frac{\int_{S^n} r |D_{g_0} \psi|^2 \, dv_{g_0}}{\int_{S^n} r^{-1} |\psi|^2 \, dv_{g_0}}. $$
 The right-hand side is exactly the Rayleigh quotient of $D_{g_*}^2$ written in the background metric $g_0$. Thanks to the spectral gap \eqref{ineq: spect gap}, we obtain:
$$ \frac{\int_{S^n} r |D_{g_0} \psi|^2 \, dv_{g_0}}{\int_{S^n} r^{-1} |\psi|^2 \, dv_{g_0}} \ge \frac{1}{4}. $$
Taking the infimum over all non-zero spinors $\tilde{\psi}$ and applying the min-max characterization yields 
$$ \lambda_1(D^2_{g_\epsilon}) \ge \frac{1}{4}. $$
Since $\lambda_1(D_{g_\epsilon})^2 \geq  \lambda_1(D^2_{g_\epsilon})$, we deduce that 
$$ \lambda_1(D_{g_\epsilon})^2 \ge \frac{1}{4}. $$
\end{proof}

\subsection{Proof of Theorem \ref{thm:unbounded}}
We conclude this section by using the estimates above to finally prove Theorem \ref{thm:unbounded}.
\begin{proof}
Let $(S^n, g_0)$, with $n \ge 3$, be the round sphere with its standard spin structure, and
consider the family of conformal metrics $g_\epsilon$ defined above. By Proposition \ref{prop:dirac_gap} and Proposition \ref{prop:laplacian_collapse}, we have
$$ \mathcal{H}(g_\epsilon) = \frac{\lambda_1(D_{g_\epsilon})^2}{\lambda_1(L_{g_\epsilon})} \ge \frac{\log(1/\epsilon)}{4C}. $$
As $\epsilon \to 0$, the ratio $\mathcal{H}(g_\epsilon) \to +\infty$. This completes the proof.
\end{proof}

\bigskip


\subsection*{Acknowledgements}

The author is thankful to David Tewodrose for his support, remarks and suggestions that improved the clarity of the manuscript. This research is supported by the Research Foundation– Flanders (FWO) via the Odysseus II programme no. G0DBZ23N.

\bibliographystyle{plain}
\bibliography{einstein_dirac}

@article{BourguignonGauduchon1992,
  author = {Bourguignon, Jean-Pierre and Gauduchon, Paul},
  title = {Spineurs, op{\'e}rateurs de {D}irac et variations de m{\'e}triques},
  journal = {Communications in Mathematical Physics},
  volume = {144},
  number = {3},
  pages = {581--599},
  year = {1992},
  doi = {10.1007/BF02099184}
}

@article{FriedrichKim2000,
  author = {Friedrich, Thomas and Kim, Eui Chul},
  title = {The {E}instein--{D}irac equation on {R}iemannian spin manifolds},
  journal = {Journal of Geometry and Physics},
  volume = {33},
  number = {1--2},
  pages = {128--172},
  year = {2000},
  doi = {10.1016/S0393-0440(99)00043-1},
  eprint = {math/9905095},
  archivePrefix = {arXiv}
}

@article{Hijazi1986,
  author = {Hijazi, Oussama},
  title = "{A conformal lower bound for the smallest eigenvalue of the {D}irac operator and Killing spinors}",
  journal = {Communications in Mathematical Physics},
  volume = {104},
  number = {1},
  pages = {151--162},
  year = {1986},
  doi = {10.1007/BF01210797}
}

@article{KarpukhinMetrasPolterovich2024,
  author = {Karpukhin, Mikhail and M{\'e}tras, Antoine and Polterovich, Iosif},
  title = {Dirac eigenvalue optimisation and harmonic maps to complex projective spaces},
  journal = {International Mathematics Research Notices},
  volume = {2024},
  number = {21},
  pages = {13758--13784},
  year = {2024},
  doi = {10.1093/imrn/rnae216},
  eprint = {2308.07875},
  archivePrefix = {arXiv}
}

@article{PetridesTewodrose2024,
  author        = {Petrides, Romain and Tewodrose, David},
  title         = {Critical metrics of eigenvalue functionals
                   via {Clarke} subdifferential},
  journal       = {Annales de l'Institut Fourier},
  note          = {To appear},
  eprint        = {2403.07841},
  year={2026},
  archivePrefix = {arXiv}
}

@article{Wood1997,
  author  = {Wood, C. M.},
  title   = {Harmonic sections and equivariant harmonic maps},
  journal = {Manuscripta Mathematica},
  volume  = {94},
  pages   = {1--13},
  year    = {1997}
}

@article{BenyounesLoubeauWood2007,
  author  = {Benyounes, M. and Loubeau, E. and Wood, C. M.},
  title   = {Harmonic sections of {R}iemannian vector bundles, and metrics of {C}heeger--{G}romoll type},
  journal = {Differential Geometry and its Applications},
  volume  = {25},
  number  = {3},
  pages   = {322--334},
  year    = {2007}
}

@article{Alexandrov2007,
  author  = {Alexandrov, Bogdan},
  title   = "{The first eigenvalue of the Dirac operator on locally reducible Riemannian manifolds}",
  journal = {Journal of Geometry and Physics},
  volume  = {57},
  number  = {2},
  pages   = {467--472},
  year    = {2007}
}

@article{Ammann2003,
  author  = {Ammann, Bernd},
  title   = {A spin-conformal lower bound of the first positive
             {Dirac} eigenvalue},
  journal = {Differential Geometry and its Applications},
  volume  = {18},
  number  = {1},
  pages   = {21--32},
  year    = {2003},
  doi     = {10.1016/S0926-2245(02)00095-5}
}

@article{AmmannGrosjeanHumbertMorel2008,
  author  = {Ammann, Bernd and Grosjean, Jean-Fran{\c c}ois
             and Humbert, Emmanuel and Morel, Bertrand},
  title   = {A spinorial analogue of {Aubin}'s inequality},
  journal = {Mathematische Zeitschrift},
  volume  = {260},
  number  = {1},
  pages   = {127--151},
  year    = {2008},
  doi     = {10.1007/s00209-007-0266-5}
}

@article{AmmannHumbertMorel2006,
  author  = {Ammann, Bernd and Humbert, Emmanuel and Morel, Bertrand},
  title   = {Mass endomorphism and spinorial {Yamabe} type problems
             on conformally flat manifolds},
  journal = {Communications in Analysis and Geometry},
  volume  = {14},
  number  = {1},
  pages   = {163--182},
  year    = {2006},
  doi     = {10.4310/CAG.2006.v14.n1.a7}
}

@article{eddaoudi2026harmoniceinsteinmapscriticalmetrics,
      title="{Harmonic-Einstein maps and critical metrics for eigenvalues of the conformal Laplacian}", 
      author={Mehdi Eddaoudi},
      year={2026},
      eprint={2610.28894},
      journal={arXiv},
      primaryClass={math.SP},
      url={https://arxiv.org/abs/2610.28894}, 
}

@article{Martynyuk2026,
  author        = {Martynyuk, Pavel},
  title         = {Conformally critical metrics and optimal bounds
                   for {Dirac} eigenvalues on spin surfaces},
  year          = {2026},
  eprint        = {2604.14840},
  journal = {arXiv},
  primaryClass  = {math.DG},
  url           = {https://arxiv.org/abs/2604.14840}
}

@article{Raulot2009,
  author  = {Raulot, Simon},
  title   = {A {Sobolev}-like inequality for the {Dirac} operator},
  journal = {Journal of Functional Analysis},
  volume  = {256},
  number  = {5},
  pages   = {1588--1617},
  year    = {2009},
  doi     = {10.1016/j.jfa.2008.11.007}
}

@article{AmmannHumbert2006Overview,
  author  = {Ammann, Bernd and Humbert, Emmanuel},
  title   = {The first eigenvalue of the {Dirac} operator
             in a conformal class},
  journal = {International Journal of Geometric Methods
             in Modern Physics},
  volume  = {3},
  number  = {5--6},
  pages   = {833--844},
  year    = {2006},
  doi     = {10.1142/S0219887806001533}
}

@article{AmmannHumbert2006Tori,
  author  = {Ammann, Bernd and Humbert, Emmanuel},
  title   = {The first conformal {Dirac} eigenvalue
             on 2-dimensional tori},
  journal = {Journal of Geometry and Physics},
  volume  = {56},
  number  = {4},
  pages   = {623--642},
  year    = {2006},
  doi     = {10.1016/j.geomphys.2005.04.007}
}

@article{Branding2019,
  author  = {Branding, Volker},
  title   = {Nonlinear {Dirac} equations, monotonicity formulas
             and {Liouville} theorems},
  journal = {Communications in Mathematical Physics},
  volume  = {372},
  number  = {3},
  pages   = {733--767},
  year    = {2019},
  doi     = {10.1007/s00220-019-03608-z}
}

@article{Ammann2009,
  author  = {Ammann, Bernd},
  title   = {The smallest {Dirac} eigenvalue in a spin-conformal class
             and {CMC} immersions},
  journal = {Communications in Analysis and Geometry},
  volume  = {17},
  number  = {3},
  pages   = {429--479},
  year    = {2009},
  doi     = {10.4310/CAG.2009.v17.n3.a2}
}

@misc{SireXu2021,
  author        = {Sire, Yannick and Xu, Tian},
  title         = {On the {B\"ar--Hijazi--Lott} invariant for the
                   {Dirac} operator and a spinorial proof of the
                   {Yamabe} problem},
  year          = {2021},
  eprint        = {2112.03640},
  archivePrefix = {arXiv},
  primaryClass  = {math.DG}
}

@article{JulioBatalla2025,
  author  = {Julio-Batalla, Jurgen},
  title   = {Spinorial {Yamabe}-Type Equations and the
             {B\"ar--Hijazi--Lott} Invariant},
  journal = {The Journal of Geometric Analysis},
  volume  = {35},
  pages   = {227},
  year    = {2025},
  doi     = {10.1007/s12220-025-02050-z}
}

@article{Lott1986,
  author  = {Lott, John},
  title   = {Eigenvalue bounds for the {Dirac} operator},
  journal = {Pacific Journal of Mathematics},
  volume  = {125},
  number  = {1},
  pages   = {117--126},
  year    = {1986}
}

@article{IsobeXu2023,
  author  = {Isobe, Takeshi and Xu, Tian},
  title   = {Solutions of spinorial {Yamabe}-type problems on
             {$S^m$}: Perturbations and applications},
  journal = {Transactions of the American Mathematical Society},
  volume  = {376},
  number  = {9},
  pages   = {6397--6446},
  year    = {2023},
  doi     = {10.1090/tran/8961}
}

@article{Soler1970,
  author  = {Soler, Mario},
  title   = {Classical, Stable, Nonlinear Spinor Field
             with Positive Rest Energy},
  journal = {Physical Review D},
  volume  = {1},
  pages   = {2766--2769},
  year    = {1970},
  doi     = {10.1103/PhysRevD.1.2766}
}

@article{GrosjeanGinoux,
     author = {Nicolas Ginoux and Jean-Fran\c{c}ois Grosjean},
     title = {Almost harmonic spinors},
     journal = {Comptes Rendus. Math\'ematique},
     pages = {811--814},
     year = {2010},
     publisher = {Elsevier},
     volume = {348},
     number = {13-14},
     doi = {10.1016/j.crma.2010.06.010},
     language = {en},
}

@article{Friedrich1980,
  author  = {Friedrich, Thomas},
  title   = {Der erste {E}igenwert des {D}irac-{O}perators einer kompakten,
         {R}iemannschen {M}annigfaltigkeit nichtnegativer
         {S}kalarkr{\"u}mmung},
  journal = {Mathematische Nachrichten},
  volume  = {97},
  number  = {1},
  pages   = {117--146},
  year    = {1980},
  doi     = {10.1002/mana.19800970111}
}

@article{Baer1992,
  author  = {B{\"a}r, Christian},
  title   = {Lower eigenvalue estimates for {D}irac operators},
  journal = {Mathematische Annalen},
  volume  = {293},
  number  = {1},
  pages   = {39--46},
  year    = {1992},
  doi     = {10.1007/BF01444701}
}

@book{Ginoux2009,
  author    = {Ginoux, Nicolas},
  title     = {The {D}irac Spectrum},
  series    = {Lecture Notes in Mathematics},
  volume    = {1976},
  publisher = {Springer},
  address   = {Berlin, Heidelberg},
  year      = {2009},
  doi       = {10.1007/978-3-642-01570-0},
  url       = {https://link.springer.com/book/10.1007/978-3-642-01570-0}
}

\end{document}